\documentclass[a4paper,12pt,oneside]{amsart}

\usepackage{fouriernc}
\usepackage{amssymb}  
\usepackage{latexsym} 
\usepackage{comment}
\usepackage{color}
\usepackage{colonequals}
\usepackage{makecell}

\usepackage{mathtools}
\mathtoolsset{showonlyrefs=true}

\usepackage{amsmath,amsthm}
\usepackage[all]{xy}

 \usepackage{epsfig}
\usepackage{tikz}
\usepackage{tikz-cd}

\definecolor{darkgreen}{rgb}{0,0.5,0}
\usepackage[
        colorlinks, citecolor=darkgreen,
         backref,       
        pdfauthor={Federico Fallucca, Roberto Pignatelli, Francesco Polizzi},
        pdftitle={The Picard number of a fibred Mori dream surface},
        linktocpage   
]{hyperref}

\usepackage[lite]{amsrefs} 

\usepackage[all]{xy} 

\usepackage{enumerate} 

\renewcommand{\to}{\longrightarrow}

\newcommand{\CC}{\ensuremath{\mathbb{C}}}
\newcommand{\FF}{\ensuremath{\mathbb{F}}}

\newcommand{\PP}{\ensuremath{\mathbb{P}}}

\newcommand{\RR}{\ensuremath{\mathbb{R}}}

\newcommand{\ZZ}{\ensuremath{\mathbb{Z}}}
\newcommand{\lr}{\longrightarrow}

\DeclareMathOperator{\GL}{GL}

\DeclareMathOperator{\Pic}{Pic}
\DeclareMathOperator{\PSL}{PSL}
\DeclareMathOperator{\Sing}{Sing}

\def\Bbb{\bf}

\def\psEff{\overline{\text{Eff}}}

\newcommand\dual{\mathrel{\raise3pt\hbox{$\underline{\mathrm{\thinspace d
\thinspace}}$}}}
\newcommand\qe{\ifhmode\unskip\nobreak\fi\quad $\Box$}       

\def\BOX{\hfill\lower.5\baselineskip\hbox{$\Box$}}

\makeatletter
\newtheorem{innercustomthm}{Theorem}
\newenvironment{customthm}[1]
  {\renewcommand\theinnercustomthm{#1}%
   \protected@edef\@currentlabel{#1}%
   \innercustomthm}
  {\endinnercustomthm}
 \makeatother

\makeatletter
\newtheorem{innercustomcor}{Corollary}
\newenvironment{customcor}[1]
  {\renewcommand\theinnercustomcor{#1}%
   \protected@edef\@currentlabel{#1}%
   \innercustomcor}
  {\endinnercustomcor}
 \makeatother

\newtheorem{theorem}{Theorem}[section]
\newtheorem{lemma}[theorem]{Lemma}
\newtheorem{corollary}[theorem]{Corollary}
\newtheorem{proposition}[theorem]{Proposition}

\newtheorem*{theorem*}{Theorem}
\newtheorem*{problem*}{Problem}
\newtheorem*{question*}{Question}

\theoremstyle{remark}

\theoremstyle{definition}
\newtheorem{definition}[theorem]{Definition}

\newtheorem{remark}[theorem]{Remark}
\newtheorem{exampleJS}[theorem]{Example (J. Starr)}
\newtheorem{exampleWS}[theorem]{Example (W. Sawin)}

\numberwithin{equation}{section}

\newcounter{nootje}
\renewcommand\check[1]
  {\marginpar{\tiny\begin{minipage}{20mm}\begin{flushleft}\thenootje : #1\end{flushleft}\end{minipage}}\addtocounter{nootje}{1}}
\begin{document}

\title[The Picard number of fibred Mori dream surfaces]{The Picard number of fibred Mori dream surfaces}

\author{Federico Fallucca}
\address{Dipartimento di Matematica,
	Universit\`a di Trento,
	via Sommarive 14,
	I-38123 Trento, Italy.}
\email{federico.fallucca@unitn.it}
\email{fallucca@altamatematica.it}
\author{Roberto Pignatelli}
\address{Dipartimento di Matematica,
	Universit\`a di Trento,
	via Sommarive 14,
	I-38123 Trento, Italy.}
\email{Roberto.Pignatelli@unitn.it}
\author{Francesco Polizzi}
\address{Dipartimento di Matematica e Applicazioni "Renato Cacioppoli", Universit\`a degli Studi di Napoli Federico II, Via Cintia, Monte S.Angelo, I-80126 Napoli, Italy.}
\email{francesco.polizzi@unina.it}
\date{\today}
\thanks{
\textit{2020 Mathematics Subject Classification}: 14E30, 14J29, 14D06 \\
\textit{Keywords}: Mori dream spaces, surfaces of general type, fibred surfaces}

 \makeatletter
    \def\@pnumwidth{2em}
  \makeatother

\begin{abstract}
Let $S$ be a smooth complex projective surface endowed with a fibration $f \colon S \to C$ onto a smooth projective curve $C$. We prove that, if $S$ is a Mori dream space (or, more generally, if its pseudo-effective cone is polyhedral) then the Picard number $\rho(S)$ can be effectively computed by counting the irreducible components of the reducible fibres of $f$.

A first simple consequence is that, given an elliptic fibration $f \colon S \to \mathbb{P}^1$ with a section and such that $S$ is a Mori dream space, the Mordell-Weil group of the general fibre of $f$  is finite.

The main application is a simple criterion for proving that a surface fibred over a curve is not a Mori dream space. 

We show that certain Horikawa surfaces, Fermat surfaces in \(\mathbb{P}^{3}\) of every degree \(\ge 4\), particular product-quotient surfaces, and the minimal simply connected numerical Godeaux surface constructed by Craighero and Gattazzo are not Mori dream spaces.
\end{abstract}

\maketitle

\tableofcontents

\section{Introduction}

Since the pioneering paper \cite{HuKeel00}, Mori dream spaces have been the subject of intense study by the mathematical community. In particular, considerable attention has been devoted to determining which algebraic varieties are Mori dream spaces and which are not. 

This research has mainly concerned special classes of algebraic varieties, such as, though not exclusively, rational varieties.
By contrast, very little is available in the literature concerning varieties of general type. 

The main result of this paper is the following, see Theorem \ref{thm: criterium}:

\begin{customthm}{A} \label{thm: main}
Let $f \colon S\to C$ be a fibration of a smooth complex projective surface $S$ onto a smooth curve $C$. 
If the pseudo-effective cone $\overline{\operatorname{Eff}}(S)$ is polyhedral, then  
\begin{equation} \label{eq:rho=n(f)+2}
\rho(S) = n(f)+2.
\end{equation}
\end{customthm}

Here, as usual, $\rho(S)$ denotes the Picard number of the surface $S$. The number $n(f)$ is the number of reducible fibres of $f$, counted suitably: a fibre with exactly $k$ distinct irreducible components counts as $k-1$ reducible fibres (see Definition \ref{def: N}). In particular, Theorem \ref{thm: main} implies that, if $\overline{\operatorname{Eff}}(S)$ is polyhedral, then all fibrations on $S$ have the same number of reducible fibres (counted as explained above), which is not true in general: for example, we produce two fibrations on the Fermat quartic surface with different numbers of reducible fibres, see Remark  \ref{rem: Fermat quartic}.

As the pseudo-effective cone of a Mori dream space is polyhedral, we obtain the following criterion for proving that a fibred surface is not a Mori dream space, see Corollary \ref{cor:crit}:

\begin{customcor}{B} \label{cor:main_corollary}
Let $S$ be a smooth projective surface endowed with a fibration $f \colon S \to C$. If
\begin{equation} \label{eq:main_corollary}
\rho(S) > n(f)+2 
\end{equation}
then $\overline{\operatorname{Eff}}(S)$ is non-polyhedral. In particular, $S$ is not a Mori dream space. 
\end{customcor} 

Although this article mostly focuses on surfaces of general type, our result can be useful  also in other contexts. As an example, we present an application to the finiteness of the Mordell-Weil group of the general fibre of an elliptic fibration, see Corollary \ref{cor: Mordell-Weil}. 

By applying Corollary \ref{cor:main_corollary}, we prove that several regular surfaces of general type are not Mori dream spaces. 
More precisely, we apply the result to:
\begin{itemize}
\item certain double covers of Hirzebruch surfaces, including some Horikawa surfaces, the minimal surfaces of general type satisfying the equality in the Noether inequality $K^2 \ge 2p_g-4$, with $p_g$ equal to any  integer $\geq 3$ (Theorem \ref{thm:covering_Hirzebruch_non_Mori_Dream} and Corollary \ref{cor:Horikawa});
\item Fermat surfaces in $\mathbb{P}^3$ of every degree $\geq 4$ (Theorem \ref{thm: Fermat_surf_with_d_geq_4_are_not_MD});
\item certain product-quotient surfaces, namely minimal resolutions of the singularities, if any, of quotients of a product of two  curves by the action of a finite group (Theorems \ref{thm:surface_JS_non_mori_dream}, \ref{thm:surface_WS_non_mori_dream}, \ref{thm: not_MOri_Dream_PQ});
\item the Craighero–Gattazzo surface, a simply connected minimal surface of general type with $p_g=0$ and $K^2=1$ (Theorem \ref{thm: Craighero-Gattazzo_non_Mds}).
\end{itemize}
It turns out that all of them have a non-polyhedral pseudo-effective cone, so they are not Mori dream spaces.

Our work was motivated by a question posed by Keum and Lee in  \cite{KL19}: they asked about the existence of a minimal surface of general type $p_g=q=0$ that is not a Mori dream space. Here we show that the Craighero-Gattazzo surface affirmatively answers Keum-Lee's question. Another example has been recently given in \cite{CCCK}: we note that the example has Picard number $2$, which implies that $\overline{\operatorname{Eff}}(S)$ is polyhedral. By contrast, we show that the
pseudo-effective cone of the Craighero-Gattazzo surface is non-polyhedral, so our example is of a very different nature.

We explicitly remark that the vanishing of the geometric genus in \cite{KL19} is meant to be a simplifying assumption, since it makes it possible to compute the Picard number of the surface easily; indeed, we think that the examples with \(p_g > 0\) provided in our work are of similar interest.

The paper is organized as follows. Section 2 contains some known preliminary results on surfaces which are Mori dream spaces. Section 3 contains the proof of Theorem \ref{thm: main} and Corollary \ref{cor:main_corollary} and the application to the elliptic surfaces. The subsequent sections prove that various families of surfaces of general type have a non-polyhedral pseudo-effective cone by studying the reducible fibres of a specific fibration defined on each of them.

\subsection{Notation and conventions.}We work over the field $\mathbb{C}$ of complex numbers.
 
If $X$ is a normal, ${\mathbb Q}$-factorial projective variety and  $K_X$ is the class of a canonical Weil divisor, we will consider its geometric genus $p_g(X)=h^0(X, \, K_X)$ and its irregularity $q(X)=h^1(X, \, \mathcal{O}_X)$.

We denote by $\equiv$ the numerical equivalence relation between divisors, and we write $N^1(X):=\left(\Pic(X)\otimes_\ZZ \RR \right)/\equiv$. The dimension of $N^1(X)$ as a real vector space is the \emph{Picard number} $\rho(X)$ of $X$.

 The \textit{effective cone} of $X$ is the convex cone in $N^1(X)$ generated by numerical classes of effective Weil divisors:
 \begin{equation}
 \operatorname{Eff}(X):=\left\{\sum a_i [D_i] \; \; | \, \; a_i\in \mathbb R_{\geq 0}, \; D_i \text{ is an effective divisor}\right\} \subset N^1(X). 
 \end{equation}
 The \textit{pseudo-effective cone} $\overline{\text{Eff}}(X)$ is the closure of $\operatorname{Eff}(X)$ in $N^1(X)$.  If $\dim X=2$, the canonical isomorphism $N^1(X) \simeq N_1(X)$ identifies $\psEff(X)$ with the \textit{Mori cone} $\overline{\text{NE}}(X)$. 

 The \emph{nef cone} $\operatorname{Nef}(X)$  is the cone generated by the classes of nef divisors. 

The \emph{semi-ample cone} $\operatorname{SAmp(X)}$ is the cone generated by the classes of semi-ample divisors.

We say that a cone in $N^1(X)$ is \textit{polyhedral} if it is the positive hull of finitely many vectors. In particular, a polyhedral cone is closed. 

A \emph{fibration} is a surjective morphism whose fibres are connected.

\section{Preliminaries on Mori dream surfaces}

We recall one of the equivalent definitions of a Mori dream space (see \cite{HuKeel00}*{Definition 1.10 and Proposition 2.9}):

\begin{definition}\label{def:Mori_dream_space}
A projective variety $X$ is a \emph{Mori dream space} if $X$ is $\mathbb{Q}$-factorial, $q(X)=0$ and the Cox ring of $X$ is finitely generated. 
\end{definition}
 
If $S$ is a smooth projective surface with $q(S)=0$, we have the chain of inclusions
 \begin{equation}
 \operatorname{SAmp}(S) \subseteq \operatorname{Nef}(S) \subseteq \psEff(S),
 \end{equation}
see \cite{AL2011}*{Proposition 1.2}. Moreover, $\operatorname{Nef}(S)$ is the dual  cone of $\psEff(S)$ in $N^1(S)$. In particular, if $\operatorname{Eff}(S)$ is a  polyhedral cone, then $\operatorname{Nef}(S)$ is a polyhedral cone as well.

\begin{theorem} {\cite{AHL2010}*{Corollary 2.6}} \label{thm:Mori_dream_surface}
Let $S$ be a $\mathbb{Q}$-factorial projective surface with $q(S) = 0$. Then $S$ is a Mori dream space if and only if $\operatorname{Eff}(S)$ is a polyhedral cone and $\operatorname{Nef}(S) = \operatorname{SAmp}(S)$.
\end{theorem}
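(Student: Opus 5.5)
The plan is to reduce the statement to the Hu--Keel characterization of Mori dream spaces (\cite{HuKeel00}*{Definition 1.10 and Proposition 2.9}). In that language, a $\mathbb{Q}$-factorial projective variety $X$ with $\Pic(X)_{\mathbb Q}=N^1(X)_{\mathbb Q}$ is a Mori dream space if and only if two conditions hold. First, $\operatorname{Nef}(X)$ is the positive hull of finitely many semi-ample classes. Second, there are finitely many small $\mathbb{Q}$-factorial modifications $g_i\colon X\dashrightarrow X_i$ such that each $X_i$ satisfies the first condition and the movable cone satisfies $\operatorname{Mov}(X)=\bigcup_i g_i^*\operatorname{Nef}(X_i)$.

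The first step is to note that $q(S)=0$ gives $\Pic^0(S)=0$. Hence $\Pic(S)$ is finitely generated and $\Pic(S)\otimes\mathbb{Q}=N^1(S)_{\mathbb Q}$, so the Hu--Keel hypotheses apply.

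Next I would specialize the second condition to surfaces, where it becomes vacuous.
\begin{itemize}
\item A small birational modification of a normal projective surface is an isomorphism in codimension one, hence an isomorphism. So the only small $\mathbb{Q}$-factorial modification is the identity.
\item $\operatorname{Mov}(S)=\operatorname{Nef}(S)$. A divisor whose stable base locus has codimension $\ge 2$ has only finitely many base points. It therefore meets every curve non-negatively, so it is nef. The reverse inclusion $\operatorname{Nef}(S)\subseteq\operatorname{Mov}(S)$ is standard: for $q=0$, nef classes lie in the closure of the big movable classes. This is the content of \cite{AL2011}*{Proposition 1.2} together with Zariski decomposition.
\end{itemize}
Consequently, $S$ is a Mori dream space if and only if $\operatorname{Nef}(S)$ is a polyhedral cone generated by semi-ample classes.

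For ``$\Leftarrow$'', assume $\operatorname{Eff}(S)$ is polyhedral, hence closed and equal to $\psEff(S)$. Then $\operatorname{Nef}(S)$, as its dual cone, is polyhedral. Its finitely many extremal rays are nef, hence semi-ample by $\operatorname{Nef}(S)=\operatorname{SAmp}(S)$, so the criterion of the previous paragraph applies.

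For ``$\Rightarrow$'', the criterion gives that $\operatorname{Nef}(S)$ is spanned by finitely many semi-ample classes. Since semi-ample classes are nef, this yields $\operatorname{Nef}(S)=\operatorname{SAmp}(S)$. It remains to show that $\operatorname{Eff}(S)$ is polyhedral. Here I would use finite generation of the Cox ring, which is graded by $\Pic(S)$ after passing to a finite-index subgroup. Every effective class is a non-negative combination of the degrees of finitely many homogeneous generators. Hence $\operatorname{Eff}(S)$ is the positive hull of finitely many classes, and is therefore polyhedral (and automatically closed).

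The main obstacle is the careful identification $\operatorname{Mov}(S)=\operatorname{Nef}(S)$ on a possibly singular $\mathbb{Q}$-factorial surface, together with the bookkeeping of the Cox ring grading when $\Pic(S)$ has torsion. I would handle these by working with Weil divisors modulo torsion, using $\mathbb{Q}$-factoriality so that intersection numbers are defined via Mumford's pullback.
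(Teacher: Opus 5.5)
Your proof is correct. The paper gives no argument for this statement: it is quoted from Artebani--Hausen--Laface (Corollary~2.6), so there is no internal proof to match.

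You instead rederive it from Hu--Keel's chamber-theoretic definition together with their Proposition~2.9. On a normal projective surface every small $\mathbb{Q}$-factorial modification is an isomorphism, and the movable cone collapses onto $\operatorname{Nef}(S)$. So being a Mori dream space reduces to $\operatorname{Nef}(S)$ being spanned by finitely many semi-ample classes. Duality with $\operatorname{Eff}(S)$ then gives ``$\Leftarrow$''. For ``$\Rightarrow$'', polyhedrality of $\operatorname{Eff}(S)$ comes from the degrees of finitely many homogeneous Cox ring generators. This makes transparent why only the two conditions in the statement survive for surfaces. The cost is that you rely on the Hu--Keel equivalence between finite generation of the Cox ring and their definition, which is exactly what the citation packages.

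The two points you flag as obstacles are routine, but one claim should be corrected.
\begin{itemize}
\item The inclusion $\operatorname{Nef}(S)\subseteq\overline{\operatorname{Mov}}(S)$ needs neither $q(S)=0$ nor Zariski decomposition. It follows from $\operatorname{Nef}(S)=\overline{\operatorname{Amp}}(S)$, since ample classes are movable. Also, the reference you invoke concerns smooth surfaces, while $S$ here may be singular.
\item If $\operatorname{Mov}(S)$ denotes the non-closed cone spanned by movable classes, your unconditional equality $\operatorname{Mov}(S)=\operatorname{Nef}(S)$ is false in general: it fails whenever $\operatorname{Nef}(S)$ has an irrational extremal ray. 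You only need it under condition~(2), and there it holds: $\operatorname{Nef}(S)$ is spanned by finitely many semi-ample, hence movable, classes. So the identity map does satisfy condition~(3).
\item In ``$\Leftarrow$'', phrase the key step via extremality. Each extremal ray of the polyhedral cone $\operatorname{Nef}(S)=\operatorname{SAmp}(S)$ is extremal in a cone generated by semi-ample classes, hence is spanned by one of them.
\item On the grading: $\mathbb{Q}$-factoriality together with $q(S)=0$ makes $\operatorname{Cl}(S)$ finitely generated. So passing to a finite-index subgroup for the Cox ring grading is harmless.
\end{itemize}
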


\section{The main theorem}\label{sec: criterion_non_poly}
We start by defining $n(f)$, the number of the reducible fibres of a fibration $f$, counted with a suitable weight. 
\begin{definition}\label{def: N}
	Let $f \colon S \to C$ be a fibration of a smooth surface $S$ to a smooth curve $C$. For each $p \in C$, we denote by $k_p(f)$ the number of distinct irreducible components of the fibre $f^{-1}(p)$ and we set 
	\begin{equation} \label{eq:n(f)}
	n(f):=\sum_{p \in C} (k_p(f)-1) 
	\end{equation}
\end{definition}
If $f^{-1}(p)$ is irreducible (but not necessarily reduced) then $k_p(f)-1=0$. Thus \eqref{eq:n(f)} is a finite sum supported over the images of the reducible fibres. Furthermore, $n(f)=0$ if and only if all fibres of $f$ are irreducible.

We can now prove the easy part of Theorem \ref{thm: main}, namely the inequality $\rho(S) \geq n(f)+2$, that does not require any assumption on $\overline{\operatorname{Eff}}(S)$.

We consider the classes in $N^1(S)$ of the irreducible curves contracted by $f$: the class $e$  of a smooth fibre, and the classes $e_1,\dots, e_m$ of all distinct irreducible components of the reducible fibres of $f$ (if any).  
\begin{lemma}\label{lem: N+1}
With the notation above, we have
\begin{equation} \label{eq:}
\dim_{\mathbb{R}}\,\operatorname{span}(e, \, e_1,\dots, e_m) = n(f)+1.
\end{equation}
As a consequence, we get
\begin{equation} \label{eq:rho-n}
\rho(S) \geq n(f)+2.
\end{equation}
\end{lemma}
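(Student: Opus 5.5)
The plan is to use Zariski's lemma on the intersection form restricted to the span of components of a fibre. I will first fix notation. For each point $p$ with reducible fibre, write $F_p = \sum_{j} a_{p,j} C_{p,j}$ with $k_p = k_p(f) \ge 2$ distinct components $C_{p,j}$ and multiplicities $a_{p,j} > 0$. The classes $e_1,\dots,e_m$ are exactly the $[C_{p,j}]$, so $m = \sum_p k_p$. All fibres are numerically equivalent, so $[F_p] = e$. Hence
\[
\operatorname{span}(e, e_1,\dots,e_m) = \operatorname{span}(e_1,\dots,e_m)
\]
when $m>0$, and it is spanned by $e \neq 0$ when $m=0$. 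In the latter case the dimension is $1 = n(f)+1$, since $e\cdot H>0$ for an ample $H$.

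The main step is to show that the relations among the $e_i$ are generated by the $r-1$ relations $[F_p] = [F_q]$, where $r$ is the number of reducible fibres; this gives dimension $m-(r-1) = \sum_p (k_p-1)+1 = n(f)+1$. Suppose $\sum_{p,j} \lambda_{p,j} e_{p,j} \equiv 0$, and set $D_p = \sum_j \lambda_{p,j} C_{p,j}$, so that $D := \sum_p D_p \equiv 0$. Components of distinct fibres are disjoint, so for each $p$ we get $0 = D\cdot D_p = D_p^2$. Zariski's lemma (the intersection form on the span of components of a fibre is negative semidefinite, with kernel spanned by $F_p$) then gives $D_p = c_p F_p$ as divisors with real coefficients. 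So $D \equiv (\sum_p c_p)\, e$, and since $e \not\equiv 0$ we get $\sum_p c_p = 0$. Conversely, every such choice of $c_p$ gives a relation. Thus the space of relations has dimension exactly $r-1$, and the count above follows.

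For the inequality $\rho(S)\ge n(f)+2$, it is enough to show that an ample class $H$ does not lie in $V := \operatorname{span}(e,e_1,\dots,e_m)$. Every element $v\in V$ satisfies $v\cdot e = 0$, because each component of a fibre is disjoint from a general fibre. On the other hand $H\cdot e>0$. So $H\notin V$, and $\rho(S) \ge \dim V + 1$.

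The main obstacle is the application of Zariski's lemma, in particular the fact that the kernel is exactly the line spanned by $F_p$; this uses the connectedness of fibres, which holds because $f$ is a fibration. I will cite this standard result, for instance from Barth–Hulek–Peters–Van de Ven, rather than reprove it.
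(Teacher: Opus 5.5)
Your proof is correct and follows essentially the same route as the paper: the fibre relations $[F_p]=e$ account for all linear dependencies, Zariski's lemma rules out any others, and an ample class lies outside the span. You make explicit two steps the paper leaves terse. The first is the derivation $D_p^2=D\cdot D_p=0\Rightarrow D_p=c_pF_p$, which turns Zariski's lemma into the independence statement. The second is the observation that the span is orthogonal to $e$ while $H\cdot e>0$.
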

\begin{proof} 
Let us extract from the set $\{e, \,e_1,\dots, e_m \}$ a basis of the corresponding vector subspace. Each reducible fibre $f^{-1}(p)$ of $f \colon S \to C$ gives a linear relation in $N^1(S)$ involving $e$ and the classes of components of $f^{-1}(p)$. If $r$ is the number of reducible fibres of $f$, we have $r=m-n(f)$ linear relations and so we are left with
\begin{equation}
m+1-r = m+1-(m-n(f))=n(f)+1
\end{equation} 
generators, which are linearly independent by the classical Zariski's lemma \cite{BPHV04}*{Lemma 8.2, Chap. III.8}.

Finally, we observe that $\operatorname{span}(e, \, e_1,\dots, e_m)$ contains no ample class, hence it is a proper subspace of $N^1(S)$ and \eqref{eq:rho-n} follows.
\end{proof}

In order to prove the reverse inequality we need the following result by Rabindranath \cite{Ra19}*{Proposition 2.1}, for which we give here a slightly different proof.
\begin{proposition}\label{prop: e^square=0_and_extremal_implies_not_poly}
Let $S$ be a smooth projective surface. Suppose that one can find three non-zero classes $e, \,v, \, h \in N^1(S)$, where $e$ is pseudo-effective and $h$ ample, such that
\begin{equation} \label{eq:e_v_h}
e^2=ev=hv=0
\end{equation}
and moreover
\begin{equation}\label{eq: la retta}
(e+ \mathbb{R}v) \cap \overline{\operatorname{Eff}}(S) = \{e\}.
\end{equation}

Then $\overline{\operatorname{Eff}}(S)$ is non-polyhedral.
 \end{proposition}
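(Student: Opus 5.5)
We argue by contradiction. Suppose $\overline{\operatorname{Eff}}(S)$ is polyhedral, generated by finitely many classes $c_1,\dots,c_N$. The idea is to show that $e$ must then be a ray of the cone that is locally "flat" in the direction $v$, which contradicts \eqref{eq: la retta}. Since $e^2=0$ and $e$ is pseudo-effective, the class $e$ lies on the boundary of the positive cone. For every pseudo-effective $c$ we also have $e\cdot c\ge 0$: indeed $e$ is nef. To see this, note that $e$ is a limit of effective classes, and if $e\cdot C<0$ for some irreducible curve $C$, then writing $e$ near-effectively forces $C$ to be a fixed component; this gives $e=aC+e'$ with $e'$ pseudo-effective and leads to a contradiction with $e^2=0$ via Hodge index. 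Alternatively, one restricts to the situation of interest, where $e$ is the fibre class and hence nef. I will take nefness of $e$ as the first step, proved via the Zariski decomposition of pseudo-effective classes: $e=P+N$ with $P$ nef, $P\cdot N=0$ and $N^2<0$ if $N\neq 0$. Then $0=e^2=P^2+N^2$, and $P^2\ge0$, so $N=0$.

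Consider the face $F=e^\perp\cap\overline{\operatorname{Eff}}(S)$. Since $e$ is nef, $F$ is a face of the polyhedral cone, and it contains $e$. The classes in $F$ are $e$-orthogonal pseudo-effective classes. Next we use the Hodge index theorem on $e^\perp$: the intersection form restricted to $e^\perp$ is negative semidefinite, with kernel $\mathbb{R}e$, because $e^2=0$ and $e\neq0$. So for $x\in e^\perp$ we have $x^2\le 0$, with equality iff $x\in\mathbb{R}e$. Now $v\in e^\perp$ and $v\notin\mathbb{R}e$. The latter holds because $hv=0$ while $he>0$: a nonzero pseudo-effective class pairs positively with an ample class. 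Hence $v^2<0$.

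Polyhedrality is used as follows. In a polyhedral cone $K$, for any $x\in K$ the tangent cone $T_xK=\overline{\bigcup_{t>0} t(K-x)}$ equals $\bigcup_{t>0} t(K-x)$; that is, every direction pointing into $K$ to first order actually enters $K$ for small time. Thus it suffices to show that $v$ or $-v$ lies in the closure of the cone of feasible directions at $e$. Here is how I would do this. Take the classes $e+\varepsilon h'$ with $h'$ ample, and look for pseudo-effective classes of the form $e+tv+\varepsilon w$. By Riemann–Roch or positivity, a class $x$ with $x^2>0$ and $x\cdot h>0$ is big, hence in $\operatorname{Eff}$. Compute $(e+tv+sh)^2=t^2v^2+2s\,e h+s^2h^2$ (using $e^2=ev=hv=0$). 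This is positive for $s>0$ and $|t|\le C\sqrt{s}$ with $C$ suitable. So $e+tv+sh$ is big for $s=\delta t^2$ with $\delta$ small enough. Hence the direction from $e$ to $e+tv+\delta t^2h$, namely $v+\delta t h$, is feasible, and as $t\to0$ its limit $v$ lies in $T_e\overline{\operatorname{Eff}}(S)$. The same argument works for $-v$.

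Since the cone is polyhedral, $T_e$ equals the set of feasible directions, so $e+\tau v\in\overline{\operatorname{Eff}}(S)$ for some small $\tau>0$. This contradicts \eqref{eq: la retta}. The main obstacle is the polyhedral tangent-cone fact, which I would justify by writing the cone as a finite intersection of half-spaces $\{\ell_i\ge0\}$. Then $T_e=\{\ell_i\ge0 \text{ for all } i \text{ with } \ell_i(e)=0\}$, and for small $\tau$ the inactive inequalities stay strict. Note that the argument above never really needed the nefness step, only $e\neq0$ pseudo-effective, which gives $eh>0$.
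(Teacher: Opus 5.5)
Your proof is correct. It follows the same geometric idea as the paper, but carries it out locally and independently of the dimension, and its key lemma is different. The paper restricts the intersection form to $W=\operatorname{span}(e,v,h)$, checks that it has signature $(1,2)$, and slices by the affine plane $\{w\in W : (w-e)h=0\}$. The interior of the conic $\{w^2=0\}$ is then a disc contained in $\overline{\operatorname{Eff}}(S)$, and its boundary passes through $e$. The hypothesis $(e+\mathbb{R}v)\cap\overline{\operatorname{Eff}}(S)=\{e\}$ makes $e$ a vertex of the polygon cut out by a polyhedral cone. A closed disc inside a convex polygon cannot have a vertex of that polygon on its boundary, which is the contradiction.

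You instead use the curve $t\mapsto e+tv+\delta t^2h$ of big classes to put $v$ in the closed tangent cone at $e$. You then use the fact that for a polyhedral cone the closed tangent cone equals the cone of feasible directions; your active-constraint argument for this is right. This buys you several things:
\begin{itemize}
\item no reduction to a $3$-dimensional slice and no signature computation;
\item no need for $v^2<0$, since only $eh>0$ enters;
\item a standard lemma in place of the two plane-geometry assertions the paper states briefly (``$e$ is a vertex of $P$'' and ``an ellipse in a polygon cannot pass through a vertex'').
\end{itemize}
The paper's route, in exchange, gives a more explicit picture. Both arguments need the fact that a real class $x$ with $x^2>0$ and $xh>0$ is pseudo-effective. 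For real classes this requires Riemann--Roch on rational classes plus density, as the paper spells out.

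There are two corrections to your write-up.

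First, your nefness paragraph is false and should be deleted. From $0=e^2=P^2+N^2$ you only get $P^2=-N^2$, not $N=0$. For instance, on the blow-up of $\mathbb{P}^2$ at a point, $e=H+E$ is effective with $e^2=0$ but $e\cdot E=-1$. As you observe yourself, the main argument only uses $eh>0$, and the face $e^{\perp}\cap\overline{\operatorname{Eff}}(S)$ is never used, so nothing is lost.

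Second, in the quadratic estimate $\delta$ must be large, not small. We have $(e+tv+\delta t^2h)^2=t^2(v^2+2\delta\,eh)+\delta^2t^4h^2$, which is positive for $t\neq 0$ as soon as $\delta>-v^2/(2\,eh)$. This slip is harmless, because the direction $v+\delta t h$ tends to $v$ for any fixed $\delta$.
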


\begin{proof}
Since $e^2 =0$ and $h^2 >0$, the classes $e, \, h$ generate a $2$-dimensional subspace  $\operatorname{span}(e, \, h) \subset N^1(S)$. Since Kleiman's criterion \cite{K66} implies $eh>0$, a straightforward calculation shows that the radical of the restriction of the intersection form to $\operatorname{span}(e, \, h)$ is trivial. But $v \in \operatorname{span}(e, \, h)^{\perp}$, so $e, \, v, \, h$ are linearly independent and the subspace $W:=\operatorname{span}(e, \, v, \, h)  
 \subset N^1(S)$ has dimension $3$. By the Hodge Index theorem the restriction of the intersection form to $h^{\perp}$ is negative definite: then, as $\dim W \cap h^{\perp}=2$, the intersection form on $W$ has signature $(1, \, 2)$.

Up to rescaling $h$ we can assume $h^2=1$, and we can choose an orthogonal basis $\{h, \, w_1, \,  w_2\}$ of $W$ such that $w_1^2=w_2^2=-1$. With respect to this basis, setting $w=xh + y_1w_1+y_2w_2$, the isotropic cone $\{w^2=0 \} \subset W$ has equation $x^2-y_1^2-y_2^2=0$. Thus, the intersection of this cone  with the affine plane 
\begin{equation}
H:=\{ w \in W \; \; | \; \; (w-e)h=0\},
\end{equation}
 which is orthogonal to the $x$-axis and does not contain the origin, is the ellipse $C$ of equation
\begin{equation} 
y_1^2+y_2^2=(eh)^2.
\end{equation} 
If $U$ is the interior part of $C$, every rational class $w \in U$ satisfies $w^2>0$ and $wh>0$: hence a sufficiently divisible integral multiple of $w$ is big by Riemann-Roch. Thus every rational class in $U$ is effective. Since rational classes are dense in $U$ and the pseudo-effective cone is closed, we obtain  $U  \subset  \psEff(S)$. 

Finally, assume by contradiction that $\psEff(S)$ is polyhedral. Then $H \cap \psEff(S)$ is a convex polygon $P$ containing $\overline{U}$. By \eqref{eq: la retta}, the point $e$ is a vertex of $P$. Summing up, the ellipse $C$ is contained in the convex polygon $P$ and passes through a vertex of it,  contradiction.
\end{proof}

We can  now complete the proof of Theorem \ref{thm: main} by showing the inequality $\rho(S) \leq n(f)+2$. Our argument is a refinement of the proof of a weaker inequality due to Rabindranath \cite{RabThesis}*{Theorem 12 p. 36}.

\begin{theorem}\label{thm: criterium}
	Let $f \colon S\to C$ be a fibration of a smooth complex projective surface $S$ onto a smooth curve $C$. If $\overline{\operatorname{Eff}}(S)$ is polyhedral, then equality \eqref{eq:rho=n(f)+2} holds, namely
	\begin{equation*} 
	\rho(S) = n(f)+2.
	\end{equation*}
	\end{theorem}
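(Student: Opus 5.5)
By Lemma \ref{lem: N+1} we already know $\rho(S)\geq n(f)+2$, so only $\rho(S)\leq n(f)+2$ remains. We argue by contradiction: assume $\psEff(S)$ is polyhedral and $\rho(S)\geq n(f)+3$, and use Proposition \ref{prop: e^square=0_and_extremal_implies_not_poly} with $e$ the class of a fibre.

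\emph{The subspace.} Let $V=\operatorname{span}(e,e_1,\dots,e_m)$, which has dimension $n(f)+1$ by Lemma \ref{lem: N+1}. Fix an ample class $h$. Every element of $V$ is orthogonal to $e$, since each fibre component has zero intersection with a fibre. Hence $e^\perp\cap h^\perp$ has dimension $\rho(S)-2$ ($e$ and $h$ are independent because $eh>0$ and $e^2=0$). The subspace $V\cap h^\perp$ has dimension $n(f)$, because $V\not\subset h^\perp$ ($eh>0$). Under our assumption $\rho(S)-2>n(f)$, so we can pick a nonzero $v\in e^\perp\cap h^\perp$ with $v\notin V$. Then $e^2=ev=hv=0$ holds automatically.

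\emph{Verifying \eqref{eq: la retta}.} This is the key step. Suppose $e+tv\in\psEff(S)$ for some $t\neq0$, and set $w=e+tv$. Since $\psEff(S)$ is polyhedral, it is generated by finitely many classes of effective divisors; indeed its extremal rays are either classes of irreducible curves or limits, and in the polyhedral case a closed cone spanned by effective classes equals the cone generated by the classes of irreducible curves appearing among finitely many generators. So we may write $w=\sum a_i[C_i]$ with $a_i>0$ and $C_i$ irreducible curves, possibly after replacing this by a limit argument. Now intersect with $e$: $we=0$. Each $C_ie\geq0$, since $e$ is nef, so $C_ie=0$ for all $i$. This means every $C_i$ is vertical, i.e.\ a fibre component or a fibre, and therefore $w\in V$. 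Then $tv=w-e\in V$, contradicting $v\notin V$. Hence $(e+\RR v)\cap\psEff(S)=\{e\}$, and Proposition \ref{prop: e^square=0_and_extremal_implies_not_poly} says $\psEff(S)$ is non-polyhedral, a contradiction.

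\emph{Main obstacle.} The delicate point is justifying that an element of a polyhedral $\psEff(S)$ is a nonnegative combination of classes of curves. I would handle it as follows. Each extremal ray $R$ of the polyhedral cone is a limit of effective classes. If $R^2<0$, a standard argument shows that $R$ is spanned by an irreducible curve. If $R^2\geq0$, the ray is nef-type, and I would treat it separately: in the equation above, write $w$ as a combination of extremal generators $r_j$, all of which are nef against $e$ since $e$ is nef. Then $r_je=0$ for every generator used. A generator with $r_j^2\geq0$ and $r_je=0$ must, by the Hodge index theorem applied to the pair $e,r_j$, be proportional to $e$. A generator with $r_j^2<0$ is a curve class, and that curve is vertical. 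In either case $w\in V$, which completes the argument.
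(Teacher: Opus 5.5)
Your argument is correct, and its skeleton is the paper's. The paper also picks $v\in\ker f_*$, which is exactly $e^\perp$, outside $V=\operatorname{span}(e,e_1,\dots,e_m)$. It normalizes $hv=0$ by adding a multiple of $e$; your direct dimension count in $e^\perp\cap h^\perp$ is equivalent. It then concludes via Proposition \ref{prop: e^square=0_and_extremal_implies_not_poly}.

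The real difference is how you verify \eqref{eq: la retta}. The paper quotes Debarre--Jiang--Voisin for the unconditional identity $\psEff(S)\cap\ker f_*=\RR_{\ge 0}e+\sum_i\RR_{\ge 0}e_i$. So for the paper the line condition holds for every fibred surface, whatever the shape of the cone. You instead exploit the polyhedrality you are trying to contradict:
\begin{itemize}
\item you write $e+tv$ as a positive combination of extremal generators;
\item nefness of $e$, together with $(e+tv)\cdot e=0$, forces every generator into $e^\perp$;
\item generators with $r^2\ge 0$ are then multiples of $e$ by the Hodge index theorem;
\item generators with $r^2<0$ are spanned by irreducible curves, which must be vertical and so lie in $V$.
\end{itemize}
This gives a self-contained proof using only standard surface theory, at the cost of obtaining the line condition only under the polyhedrality assumption. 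You can remove that cost. Replace the extremal-ray decomposition by the Zariski decomposition $w=P+N$ of a pseudo-effective class $w\in e^\perp$. The same reasoning applies: $P\in\RR_{\ge0}e$ by Hodge index, and the components of $N$ are vertical. This drops the polyhedrality assumption and recovers the surface case of the Debarre--Jiang--Voisin statement the paper cites.

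Two points of presentation. First, your opening justification in the key step, that the extremal rays of a polyhedral $\psEff(S)$ come from curves, is false as stated. Isotropic extremal rays need not be spanned by curve classes and may even be irrational. Your final paragraph correctly bypasses this, so just delete it. Second, the ``standard argument'' for $r^2<0$ deserves a line:
\begin{itemize}
\item some irreducible curve $C$ has $r\cdot C<0$, since otherwise $r$ would be nef and $r^2\ge 0$;
\item a limit argument gives $r-aC\in\psEff(S)$ for some $a>0$;
\item extremality then forces $r\in\RR_{>0}[C]$.
\end{itemize}
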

\begin{proof}
By  Lemma \ref{lem: N+1} it is equivalent to prove that, if $\rho(S)> n(f)+2$, then $\overline{\operatorname{Eff}}(S)$ is non-polyhedral. 

Consider the pushforward map $f_*\colon N^1(S) \cong N_1(S)\to  N_1(C) \simeq \mathbb R$. The ample class intersects the fibre $e$ positively,  so $f_*$ is non-zero and $\ker \, f_* \subset N^1(S)$ has codimension $1$.  

Any irreducible component of a fibre is contracted by $f$, so we have the inclusion 
\begin{equation}\label{eq: inclusion_subspaces}
	\operatorname{span} (e, \, e_1,\dots, e_m) \subseteq \ker f_*.
\end{equation}

By Lemma \ref{lem: N+1}, if $\rho(S)> n(f)+2$ then the inclusion in \eqref{eq: inclusion_subspaces} is strict, because
\begin{equation}
\dim \ker f_* = \rho(S)-1   >  n(f)+1=\dim_{\mathbb{R}}\,\operatorname{span}(e, \, e_1,\dots, e_m).
\end{equation}
This means that we can find $v \in \ker f_*$ such that 
\begin{equation} \label{eq:v_notin_span}
v \notin \operatorname{span}(e, \, e_1, \, \ldots, e_m). 
\end{equation} 
 Since $v \in \ker f_*$, we have $ev=0$. Let us fix an ample divisor $h$; then, up to adding to $v$ a suitable multiple of $e$, we may assume $hv=0$. 

Let us consider now the line $e+ \mathbb{R}v \subset \ker f_*$.  A result by Debarre-Jiang-Voisin  \cite{DJV13}*{Thm. 5.1} implies that
\begin{equation} \label{eq:debarre-Jiang-Voisin}
\overline{\operatorname{Eff}}(S) \cap \ker f_* = \RR_{\ge 0} \cdot e+ \RR_{\ge 0}\cdot e_1 + \cdots + \RR_{\ge 0} \cdot  e_m,
\end{equation}
and so, because of \eqref{eq:v_notin_span}, we have
\begin{equation}
(e+ \mathbb{R}v) \cap \overline{\operatorname{Eff}}(S) = \{e\}.
\end{equation}
Summing up, the three classes $e, \, v, \, h \in N^1(S)$ satisfy all the assumptions in Proposition \ref{prop: e^square=0_and_extremal_implies_not_poly}, hence $\psEff(S)$ is non-polyhedral.
\end{proof}

Corollary \ref{cor:main_corollary} follows now easily:
\begin{corollary} \label{cor:crit}
Let $S$ be a smooth projective surface endowed with a fibration $f \colon S \to C$. If
\begin{equation*}
\rho(S) > n(f)+2 
\end{equation*}
then $\overline{\operatorname{Eff}}(S)$ is non-polyhedral. In particular, $S$ is not a Mori dream space. 
\end{corollary}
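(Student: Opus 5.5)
The corollary is the contrapositive of Theorem \ref{thm: criterium}, combined with the fact that Mori dream surfaces have polyhedral effective cone. We argue as follows.

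First, suppose $\rho(S) > n(f)+2$ and assume by contradiction that $\overline{\operatorname{Eff}}(S)$ is polyhedral. Theorem \ref{thm: criterium} then gives $\rho(S) = n(f)+2$, which contradicts the hypothesis. Hence $\overline{\operatorname{Eff}}(S)$ is non-polyhedral. This is really the content of the first paragraph of the proof of Theorem \ref{thm: criterium}, where the strict inequality was used directly to produce the class $v$ and to apply Proposition \ref{prop: e^square=0_and_extremal_implies_not_poly}.

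For the second assertion, suppose $S$ were a Mori dream space. By Definition \ref{def:Mori_dream_space} we have $q(S)=0$, and $S$ is smooth, hence $\mathbb{Q}$-factorial. Theorem \ref{thm:Mori_dream_surface} then says that $\operatorname{Eff}(S)$ is a polyhedral cone. A polyhedral cone is closed, by the conventions in the notation section. Therefore $\operatorname{Eff}(S) = \overline{\operatorname{Eff}}(S)$, so $\overline{\operatorname{Eff}}(S)$ is polyhedral, contradicting the first part.

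There is no real obstacle here. The only point needing a moment's care is the passage from ``$\operatorname{Eff}(S)$ polyhedral'' to ``$\overline{\operatorname{Eff}}(S)$ polyhedral'', and this is immediate because polyhedral cones are closed.
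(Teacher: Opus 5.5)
Your proof is correct and matches the paper's: the first claim is the contrapositive of Theorem \ref{thm: criterium}, and the second follows from Theorem \ref{thm:Mori_dream_surface}. You also spell out the point the paper leaves implicit, namely that a polyhedral cone is closed, so $\operatorname{Eff}(S)=\overline{\operatorname{Eff}}(S)$ for a Mori dream surface.
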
 
\begin{proof} 
Theorem \ref{thm: criterium} together with the assumption $\rho(S)>n(f)+2$  implies that 
$\psEff(S)$ is non-polyhedral. The last claim follows from Theorem \ref{thm:Mori_dream_surface}.
\end{proof}
 
\begin{remark} \label{rmk:criterium_necessary_non_sufficient}
There are surfaces such that  $\rho(S)= n(f)+2$ and nevertheless $\psEff(S)$ is non-polyhedral. One example is the Fermat quartic surface $S_4 \subset \mathbb{P}^3$, see Remark \ref{rem: Fermat quartic}.
\end{remark} 
 
We conclude with a direct consequence of Theorem \ref{thm: criterium} regarding the finiteness of the Mordell-Weil group of the generic fibre of an elliptic surface. 
\begin{corollary}\label{cor: Mordell-Weil}
Let $S$ be a smooth projective surface with $q(S)=0$ and let $f \colon S \to C$ be an elliptic fibration with a section. If $\overline{\operatorname{Eff}}(S)$ is polyhedral, then the Mordell--Weil group of the generic fibre of $f$ is finite. 
\end{corollary}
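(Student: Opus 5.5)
The plan is to combine Theorem \ref{thm: criterium} with the Shioda--Tate formula for elliptic surfaces with a section. Since $\overline{\operatorname{Eff}}(S)$ is polyhedral, Theorem \ref{thm: criterium} gives $\rho(S)=n(f)+2$. On the other hand, the Shioda--Tate formula, valid for an elliptic fibration $f\colon S\to C$ with a section $O$ (and, as usual, $S$ relatively minimal with non-trivial fibration), reads
\begin{equation}
\rho(S) = 2 + \sum_{p\in C}\bigl(k_p(f)-1\bigr) + \operatorname{rank} MW(f),
\end{equation}
that is, $\rho(S)=n(f)+2+\operatorname{rank} MW(f)$. Comparing the two formulas gives $\operatorname{rank} MW(f)=0$. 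Since the Mordell--Weil group of the generic fibre is finitely generated (Lang--Néron), it is then finite.

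The Shioda--Tate formula comes from the identification $MW(f)\cong NS(S)/T$. Here $T$ is the trivial lattice spanned by the zero section $O$, the fibre class $e$, and the components of reducible fibres not meeting $O$. Tensoring with $\RR$ shows that $\operatorname{rank} MW = \rho(S) - \dim T_\RR$. The span of $e$ and all fibre components has dimension $n(f)+1$ by Lemma \ref{lem: N+1}. Adding $O$, which is not in $\ker f_*$ because $O\cdot e=1$, gives $\dim T_\RR = n(f)+2$. Using $q(S)=0$, or more generally working with $N^1$, the numerical and Néron--Severi ranks agree, so no torsion issues affect the rank count.

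The main obstacle is whether Shioda--Tate applies in the stated generality. It is usually stated for relatively minimal elliptic surfaces with a section and at least one singular fibre. If $S$ is not relatively minimal, I would reduce to that case: each blow-up increases $\rho$ by one and also increases $n(f)$ by one, since the exceptional curve becomes an extra fibre component. So both sides of the comparison shift equally, and the Mordell--Weil group is unchanged because it depends only on the generic fibre. If there are no singular fibres, the statement should follow directly: then $n(f)=0$, so $\rho(S)=2$, and the argument above gives rank $0$ anyway, using the same rank decomposition $NS(S)_\RR = T_\RR\oplus MW\otimes\RR$. The hypothesis $q(S)=0$ is used only to ensure we are in the standard setting, and it is not essential for the rank computation.
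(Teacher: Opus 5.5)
Your main line is the paper's own proof: Theorem \ref{thm: criterium} gives $\rho(S)=n(f)+2$, the Shioda--Tate formula then forces the Mordell--Weil rank to vanish, and your reduction to the relatively minimal model is correct (each blow-up raises both $\rho$ and $n(f)$ by one). The problem is how you handle $q(S)=0$: you never show what it does, and you call it inessential.

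It is needed exactly at the step ``rank $0$ and finitely generated, hence finite''. The Lang--N\'eron theorem only gives finite generation of $E(K)/\tau B(k)$, where $\tau B$ is the $K/k$-trace. The isomorphism $MW\cong NS(S)/T$ that you invoke also fails when that trace is non-zero. So your claim that the case without singular fibres ``goes through anyway'' is false. Take $S=E\times\mathbb{P}^1$ with $f$ the projection to $\mathbb{P}^1$. There is a section, $\rho(S)=2=n(f)+2$, and $\psEff(S)$ is the polyhedral cone spanned by the two fibre classes. Yet the Mordell--Weil group is $\operatorname{Mor}(\mathbb{P}^1,E)=E(\mathbb{C})$, which is infinite, while $NS(S)/T=0$. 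So without $q(S)=0$ the corollary is false, and your argument as written does not see where the hypothesis enters.

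The repair is short, but it must use $q(S)=0$. There are two ways to do it.

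\emph{Via a singular fibre.} You have $\chi(\mathcal{O}_S)=1+p_g(S)\geq 1$, so on the relatively minimal model $e(S)=12\chi(\mathcal{O}_S)>0$ and $f$ has at least one singular fibre. Hence the family is not constant and the trace vanishes. Then Shioda--Tate in the form $MW\cong NS(S)/T$ applies, giving a finitely generated group of rank $0$.

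\emph{Directly.} $q(S)=0$ gives $\operatorname{Pic}(S)=NS(S)$, which is finitely generated. Restriction $\operatorname{Pic}(S)\to\operatorname{Pic}(E_K)$ to the generic fibre is surjective (take closures of divisors). The map $P\mapsto [P]-[O]$ embeds $E(K)$ in $\operatorname{Pic}(E_K)$. So $E(K)$ is finitely generated, and rank $0$ means it is finite.

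This is what the paper's one-line appeal to Shioda--Tate ``under our assumptions'' tacitly relies on.
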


\begin{proof}
Shioda--Tate formula \cite{SS}*{Corollary 6.13} implies that, under our assumptions, the difference $\rho(S) - (n(f) + 2)$ equals the rank of the Mordell--Weil group of the generic fibre of $f$.
\end{proof}
While we were finalizing this article, a related result for Jacobian elliptic surfaces of Kodaira dimension $1$  was announced on the arXiv \cite{Antonio+2cinesi}*{Proposition 3.1}. 

\section{Irregular examples of non-polyhedral \texorpdfstring{$\psEff(S)$}{Eff(S)}: self-products of curves and diagonal double Kodaira surfaces} \label{subsec:first-examples}

As a first application of the previous theory, we get the following result due to Rabindranath \cite{Ra19}*{Theorem 1.1}.

\begin{proposition} \label{prop:CxC}
Let $C$ be a smooth curve with $g(C) \geq 1$. Then $\overline{\operatorname{Eff}}(C \times C)$ is non-polyhedral.
\end{proposition}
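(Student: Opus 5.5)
We apply Corollary \ref{cor:crit} to the first projection $f=\mathrm{pr}_1 \colon C \times C \to C$. This is a fibration with connected fibres, all of them isomorphic to $C$ and hence irreducible. So every $k_p(f)=1$, and $n(f)=0$. By Corollary \ref{cor:crit} it then suffices to show $\rho(C\times C) > 2$, i.e.\ $\rho(C \times C)\geq 3$. Theorem \ref{thm: criterium} is stated for an arbitrary smooth projective surface, with no hypothesis on $q(S)$, so the irregularity of $C\times C$ causes no trouble.

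To find three independent classes we use the two fibre classes $f_1=[\{p\}\times C]$ and $f_2=[C\times\{p\}]$, together with the diagonal $\Delta$. Their intersection numbers are
\begin{equation}
f_1^2=f_2^2=0,\qquad f_1f_2=1,\qquad f_1\Delta=f_2\Delta=1,\qquad \Delta^2=2-2g,
\end{equation}
the last by adjunction, since $\Delta\cong C$ has normal bundle isomorphic to $T_C$. The Gram matrix is
\begin{equation}
\begin{pmatrix} 0&1&1\\ 1&0&1\\ 1&1&2-2g\end{pmatrix},
\end{equation}
with determinant $0\cdot(0-1)-1\cdot(2-2g-1)+1\cdot(1-0)=2g$. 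For $g\geq 1$ this is nonzero, so the three classes are numerically independent and $\rho(C\times C)\ge 3>n(f)+2$. The corollary then gives that $\overline{\operatorname{Eff}}(C\times C)$ is non-polyhedral.

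The only real check is the determinant computation. It explains the hypothesis $g\ge1$: for $g=0$ the determinant vanishes, and indeed $\PP^1\times\PP^1$ has $\rho=2$ and a polyhedral cone.
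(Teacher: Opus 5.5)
Your proof is correct and follows the paper's argument: use a projection so that $n(f)=0$, show $\rho(C\times C)\ge 3$ using the two fibre classes and the diagonal, and apply Corollary~\ref{cor:crit}. The one addition is that you compute the Gram determinant $2g$ explicitly, a step the paper leaves to the reader.
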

\begin{proof}
If  $f \colon C \times C \to C$ is  the projection onto one of the factors, then $n(f)=0$. Moreover, we have  $\rho(C \times C) \geq 3$, since the two fibres and the diagonal $\Delta\subset C\times C$ 
are linearly independent in $N^1(C \times C)$ for $g(C)\geq 1$. So the result follows from Corollary \ref{cor:main_corollary}.
\end{proof}

We can use the same technique to prove that some double Kodaira surfaces (i.e., surfaces endowed with two smooth, non-isotrivial fibrations, see \cite{CatRol09}) also have a non-polyhedral pseudo-effective cone. The following example can be found in \cite{PolSab22}, see also \cite{PolSab26} for related constructions. Let $C$ be a smooth curve with $g(C)=2$ and let $\Delta \subset C \times C$ be the diagonal. Then there exists a $G$-cover 
\begin{equation} \label{eq:extra-special cover}
\pi \colon S \to C \times C,
\end{equation}
where $G$ is an extra-special group of order $2^5=32$,  which is branched precisely over $\Delta$, with branching order $2$. Composing $\pi$ with the two natural projections  of $C \times C$, we get two smooth, non-isotrivial fibrations $f_i \colon S \to C$. The surface  $S$ is called a \emph{diagonal double Kodaira surface}.
\begin{proposition}
Let $S$ be a diagonal double Kodaira surface as described above. Then the cone $\overline{\operatorname{Eff}}(S)$ is non-polyhedral.
\end{proposition}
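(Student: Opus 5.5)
The plan is to apply Corollary \ref{cor:crit} to one of the two fibrations, say $f_1 \colon S \to C$. Since $f_1$ is smooth, every fibre is a smooth connected curve, hence irreducible. Therefore $n(f_1)=0$, and it suffices to show that $\rho(S) \geq 3$.

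To obtain three independent classes, pull back the classes of $C \times C$ along the finite map $\pi$. Consider the classes $F_1=\pi^*(\{pt\}\times C)$, $F_2=\pi^*(C\times\{pt\})$ and $D=\pi^*\Delta$. Since $\pi$ is finite and surjective of degree $32$, we have $\pi^*a\cdot\pi^*b = 32\,(a\cdot b)$ for all $a,b\in N^1(C\times C)$. Thus $\pi^*$ multiplies the intersection form by $32$, and so it is injective on $N^1$: if $\pi^*a\equiv 0$, then $a\cdot b=0$ for every $b$, hence $a\equiv 0$. It follows that $\rho(S)\ge \rho(C\times C)$.

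It remains to check that $\rho(C\times C)\ge 3$, which holds because $g(C)=2\ge 1$, exactly as in the proof of Proposition \ref{prop:CxC}. Concretely, the Gram matrix of (horizontal fibre, vertical fibre, $\Delta$) is
\[
\begin{pmatrix}0&1&1\\1&0&1\\1&1&2-2g\end{pmatrix},
\]
whose determinant is $2g\neq 0$. Hence these three classes are independent, and so are their pullbacks $F_1,F_2,D$. Corollary \ref{cor:crit} then gives that $\overline{\operatorname{Eff}}(S)$ is non-polyhedral.

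I expect no real obstacle here. The only points needing care are that smoothness of $f_1$ (stated in the construction) really gives $n(f_1)=0$, and the projection formula argument for injectivity of $\pi^*$. Connectedness of the fibres is part of the definition of fibration used in the construction, so this is fine.
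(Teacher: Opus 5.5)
Your proposal is correct and follows the paper's route. Like the paper, you use $n(f_1)=0$ for the smooth fibration, show that $F_1$, $F_2$ and the pulled-back diagonal are independent so that $\rho(S)\ge 3$, and then apply Corollary \ref{cor:crit}. The only difference is cosmetic: you obtain independence abstractly from $\pi^*a\cdot\pi^*b=32\,(a\cdot b)$. The paper instead writes out the Gram matrix on $S$ using the ramification curve $D$ with $\pi^*\Delta=2D$, and that matrix is just $32$ times the $C\times C$ matrix after this rescaling.
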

\begin{proof}
Writing $F_1$, $F_2$ for the fibres of the two fibrations on $S$, and calling $D$ the ramification locus of $\pi$, namely, the  smooth curve $D \subset S$ such that $\pi^* \Delta = 2D$, we get
\begin{equation}
F_1^2=F_2^2=0,  \quad D^2=-16, \quad F_1D=F_2D=16, \quad F_1F_2=32. 
\end{equation}
Thus the Gram matrix of the restriction of the intersection form to the subspace of $N^1(S)$ spanned by $F_1, \, F_2, \, D$ is the matrix
\begin{equation}
\begin{pmatrix}
0 & 32 & 16 \\
32 & 0 & 16 \\
16 & 16 & -16 \\
\end{pmatrix},
\end{equation} 
whose determinant is $32768$. So $F_1, \, F_2, \, D$  are linearly independent in $N^1(S)$, hence we get $\rho(S) \geq 3$. On the other hand, since $f_1 \colon S \to C$ is a smooth fibration, we have $n(f_1)=0$. Thus the claim follows from Corollary \ref{cor:main_corollary}.
\end{proof}
Similar examples can be constructed using a smooth curve $C$ of any genus $g \geq 2$ and an extra-special group $G$ of order $p^{2g+1}$, with  $p$ dividing $g+1$; see \cite{CaPol21} for more details.

\section{Double covers of ruled surfaces and Horikawa surfaces}

In this section we show how to apply Corollary  \ref{cor:main_corollary} to double covers of ruled surfaces. Let $n$ be a non-negative integer and consider the Hirzebruch ruled surface $\mathbb F_n$. We have 
\begin{equation} \label{eq:Neron_Severi_Hirzebruch}
N^1(\mathbb{F}_n)=\mathbb{R}[F] \oplus \mathbb{R}[C_0],
\end{equation}
where $F$ is the fibre of the ruling and $C_0$ is a section with $C_0^2=-n$ (which is unique as soon as $n \geq 1$). Let us consider a double cover
\begin{equation} \label{eq:cover_Hirzebruch}
\pi \colon S \to \mathbb{F}_n,
\end{equation}
branched over a smooth curve $B \subset \mathbb{F}_n$. Let us also consider the fibration $f \colon S \to \PP^1$ obtained composing $\pi \colon S \to \mathbb{F}_n$ with the ruling of $\FF_n$. 

We want to make the Picard number of $S$ big enough, by applying the following  
\begin{lemma} \label{lem:Picard_number_Cover_F_n}
Assume that the branch divisor $B \subset \mathbb{F}_n$  is everywhere tangent to $C_0$, namely, that the restriction $B|_{C_0}$ yields an effective divisor $($possibly trivial$)$ which is $2$-divisible in $\operatorname{Div}(C_0)$. If $C_0B=0$, assume in addition $n>0$. Then $\rho(S) \geq 3$.
\end{lemma}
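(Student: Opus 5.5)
The plan is to show that $\pi^*C_0$ splits as a sum of two distinct curves $\widetilde C_1+\widetilde C_2$ on $S$. Then we will check that $\pi^*F$, $\widetilde C_1$, $\widetilde C_2$ are linearly independent in $N^1(S)$. Since $\pi^*$ is injective on $N^1$ (intersection numbers multiply by $2$), we already have $\rho(S)\ge 2$. The third class must come from the splitting.

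The first step is the splitting of $C_0$. Consider the restricted double cover $\pi^{-1}(C_0)\to C_0\cong\mathbb P^1$. It is determined by the line bundle $L|_{C_0}$, where $B\in|2L|$, and it is branched over the divisor $B|_{C_0}$.

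If $B$ is everywhere tangent to $C_0$, then $B|_{C_0}=2D$ for an effective divisor $D$. Locally near a point of $C_0$, the cover is $w^2=b$ with $b|_{C_0}=u^2\cdot(\text{unit})$. Hence the normalization of $\pi^{-1}(C_0)$ is an unramified double cover of $\mathbb P^1$, and such a cover is trivial. So the preimage is reducible. Since $C_0$ is not contained in $B$ ($B$ is smooth and tangent to $C_0$, or disjoint from it), we get $\pi^*C_0=\widetilde C_1+\widetilde C_2$ with $\widetilde C_1\neq\widetilde C_2$, exchanged by the covering involution $\iota$.

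The case $C_0B=0$ is exactly the case where $B$ is disjoint from $C_0$. Then the cover of $C_0$ is étale, hence trivial, and we need $n>0$ for the intersection computation below.

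The second step is the intersection numbers. We have $\widetilde C_1\widetilde C_2=\tfrac12\deg(B|_{C_0})=\tfrac12 C_0B$, counting the $\tfrac12 C_0B$ tangency points, where the two branches meet. Since $\iota$ swaps the curves, $\widetilde C_1^2=\widetilde C_2^2$. From $(\pi^*C_0)^2=2C_0^2=-2n$ we then get
\begin{equation}
\widetilde C_i^2=\tfrac12\bigl(-2n-C_0B\bigr)=-n-\tfrac12 C_0B .
\end{equation}
We also have $\widetilde C_i\cdot\pi^*F=1$ and $(\pi^*F)^2=0$.

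The main obstacle is verifying the local contribution $\widetilde C_1\widetilde C_2$ at a tangency point. One way is to avoid that local check: we only need $\widetilde C_1\not\equiv\widetilde C_2$. If $\widetilde C_1\equiv\widetilde C_2$, then $\widetilde C_1^2=\widetilde C_1\widetilde C_2\ge 0$, because the two curves are distinct and irreducible. On the other hand, $\widetilde C_1^2+\widetilde C_1\widetilde C_2=\tfrac12(\pi^*C_0)^2=-n$. These together force $n=0$ and $\widetilde C_1\widetilde C_2=0$, i.e. $C_0B=0$, which is the excluded case. Hence $\widetilde C_1\not\equiv\widetilde C_2$.

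Then the classes $\pi^*F$, $\widetilde C_1$, $\widetilde C_2$ are linearly independent. Suppose $\widetilde C_1-\widetilde C_2\in\operatorname{span}(\pi^*F,\pi^*C_0)$. Because $\iota^*$ sends $\widetilde C_1-\widetilde C_2$ to its negative and fixes pulled-back classes, we get $\widetilde C_1-\widetilde C_2=0$ in $N^1(S)$, a contradiction. Therefore $\rho(S)\ge 3$.
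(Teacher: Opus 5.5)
Your proof is correct. Its first half is the paper's argument: you split $\pi^*C_0$ using the étale normalization over $C_0\simeq\mathbb P^1$, note that the covering involution swaps the two components, and get $\widetilde C_i^2=-n-\tfrac12 C_0B$.

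The routes differ in how linear independence is obtained.

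\begin{itemize}
\item The paper computes $C_1C_2=\tfrac12 C_0B$ explicitly. It then checks that the Gram matrix of $C_1, C_2, e$ has determinant $2(C_0B+n)>0$.
\item You split $N^1(S)$ into $\iota$-eigenspaces. Here $\pi^*N^1(\mathbb F_n)$ is a $2$-dimensional invariant subspace and $\widetilde C_1-\widetilde C_2$ is anti-invariant. So it is enough to know $\widetilde C_1\not\equiv\widetilde C_2$, which you get from a positivity argument needing only $\widetilde C_1\widetilde C_2\ge 0$.
\end{itemize}

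What your route buys is that it avoids the local intersection computation at the tangency points. It also isolates a general principle: if a curve's pullback to a double cover splits into two swapped, non-numerically-equivalent components, the Picard number goes up by at least one. The paper's computation instead gives the explicit intersection numbers, and makes the role of the hypothesis on $n$ visible as positivity of a determinant.

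One small point to tighten. Your concluding ``i.e.\ $C_0B=0$'' still tacitly uses the formula $\widetilde C_1\widetilde C_2=\tfrac12 C_0B$, which you said you wanted to avoid. The direction you actually need is immediate. If $p\in B\cap C_0$, then $\pi^{-1}(p)$ is a single point lying on both $\widetilde C_1$ and $\widetilde C_2$, so $\widetilde C_1\widetilde C_2>0$. Hence $\widetilde C_1\widetilde C_2=0$ forces $B\cap C_0=\emptyset$, and together with $n=0$ this is the excluded case.
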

\begin{proof}
The assumption that $B$  is everywhere tangent to $C_0$ implies that the normalization of the restriction of $\pi \colon S \to \mathbb{F}_n$ to $C_0$ is  \'etale, hence disconnected as $C_0 \simeq \mathbb{P}^1$ has trivial fundamental group.  Then we get  $\pi^*(C_0)=C_1+C_2$, where $C_1$, $C_2$ are smooth rational curves in $S$ and  $C_1  C_2 = \frac{1}{2} C_0 B$.

The double cover involution on $S$ exchanges $C_1$ and $C_2$, so that
\begin{equation}
C_1^2=C_2^2=\frac12 (C_1+C_2)^2-C_1C_2=C_0^2-\frac12 C_0B.
\end{equation}

Calling $e=\pi^*F$ the class of a fibre of $f$, we have 
\begin{equation}
\pi^*(C_0) \cdot e = C_0 \cdot \pi_*(e) = C_0 \cdot (2F) =2,
\end{equation}
hence $C_1 \cdot e = C_2 \cdot e =1$.  Thus the Gram matrix of the restriction of the intersection form to the subspace of $N^1(S)$ spanned by $C_1, \, C_2, \, e$ is the matrix
\begin{equation}
\begin{pmatrix}
 C_0^2-\frac{1}{2} C_0 B& \frac12 C_0B & 1\\
 \frac12 C_0 B& C_0^2-\frac12 C_0 B& 1\\
  1 & 1 & 0\\
\end{pmatrix}
\end{equation} 
whose determinant is $2(C_0B-C_0^2) = 2(C_0B+n) >0$. Therefore $C_1, \, C_2, \, e$ are linearly independent, and so $\rho(S) \geq 3$. 
\end{proof}

\begin{lemma} \label{lem:polynomial-not-a-square}
Let $k$ be an algebraically closed field with $\operatorname{char}(k) =0$, and let $\alpha, \, \beta \in k$ such that $(\alpha, \, \beta) \neq (0, \, 0)$. For all $b \geq 2$, the polynomial
\begin{equation*}
p(x)=x^{2b} + \alpha x^{2b-1} + \beta \in k[x]
\end{equation*}
has at least one simple root in $k$.
\end{lemma}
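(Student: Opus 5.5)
The plan is to argue by contradiction, using the fact that over a field of characteristic $0$ a root is multiple exactly when it is also a root of the derivative. Assume every root of $p$ in $k$ is multiple. Compute
\begin{equation*}
p'(x) = 2b\,x^{2b-1} + (2b-1)\alpha x^{2b-2} = x^{2b-2}\bigl(2b\,x + (2b-1)\alpha\bigr).
\end{equation*}
Since $\operatorname{char}(k)=0$, the coefficients $2b$ and $2b-1$ are nonzero. Hence the roots of $p'$ are $0$ and $x_0 := -\frac{(2b-1)\alpha}{2b}$, so every multiple root of $p$ lies in $\{0, \, x_0\}$.

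The argument then splits into two cases according to $\beta$.

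\emph{Case $\beta \neq 0$.} Here $p(0)=\beta\neq 0$, so $0$ is not a root and every root of $p$ would have to equal $x_0$. Then $p(x)=(x-x_0)^{2b}$. Comparing constant terms gives $x_0\neq 0$, which forces $\alpha\neq 0$. Comparing the coefficients of $x^{2b-2}$, which is legitimate because $2b-2\geq 2>0$ when $b\geq 2$, gives $\binom{2b}{2}x_0^2 = 0$ on the right against $0$ on the left. This yields $x_0=0$, a contradiction.

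\emph{Case $\beta=0$.} Then $\alpha\neq 0$ and $p(x)=x^{2b-1}(x+\alpha)$. The root $-\alpha\neq 0$ is simple, which again contradicts the assumption.

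The only step needing care is the coefficient comparison in the first case, and in particular checking that $b\geq 2$ guarantees the $x^{2b-2}$ coefficient of $p$ is genuinely $0$, distinct from both the leading term and the $x^{2b-1}$ term. Everything else is routine.
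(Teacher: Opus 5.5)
Your proof is correct and follows the paper's argument. Both factor $p'(x)=x^{2b-2}\bigl(2bx+(2b-1)\alpha\bigr)$, handle $\beta=0$ via the simple root $-\alpha$, and use $p(0)=\beta\neq 0$ to confine any multiple root to the single point $x_0$. The only difference is the last step: the paper notes that $x_0$ is a simple root of $p'$, so it has multiplicity at most $2$ in $p$, and $\deg p=2b\geq 4$ then leaves simple roots, whereas you force $p=(x-x_0)^{2b}$ and compare the $x^{2b-2}$ coefficients. Both finishes work, and your case $\beta\neq 0$ also covers the paper's separate case $\alpha=0$.
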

\begin{proof}
If $\beta =0$ then $p(x)=x^{2b} + \alpha x^{2b-1} = x^{2b-1}(x+\alpha)$ has $-\alpha$ as a simple root.  If $\alpha=0$ then $p(x)=x^{2b} + \beta$ which has only simple roots.
Thus we can assume $\alpha \neq 0$ and $\beta \neq 0$. The derivative of $p(x)$ is the polynomial
\begin{equation} \label{eq:derivative_p}
p'(x)= 2bx^{2b-1}+(2b-1) \alpha x^{2b-2} = x^{2b-2}(2bx + (2b-1) \alpha),
\end{equation}
which has the two distinct roots $0$ (with multiplicity $2b-2$) and $\gamma := \frac{(1-2b) \alpha}{2b}$ (with multiplicity $1$). 

As $p(0)= \beta \neq 0$, the only possible multiple root of $p(x)$ is  $\gamma$.  Moreover, since $\gamma$ is a simple root of $p'(x)$, it can have at most multiplicity $2$ as a root of $p(x)$. But the degree of $p(x)$ is $2b \geq 4$, so $p(x)$ has at least two simple roots.
\end{proof}

In the sequel, we identify  $\FF_n$ with  a  toric variety $\CC^4// (\CC^*)^2$ as follows: we take 
coordinates $t_0, \, t_1, \, x_0, \, x_1$ on $\CC^4$,   and we let $(\CC^*)^2$ act by the weight matrix
\begin{equation} \label{eq:weight_ matrix}
\begin{pmatrix}
t_0&t_1&x_0&x_1\\
1&1&0&n\\
0&0&1&1
\end{pmatrix}
\end{equation}
and we choose $(t_0, \, t_1)\cap (x_0, \, x_1)$ as the  irrelevant ideal. 

This identifies the vector spaces $H^0(\FF_n,  \, \mathcal{O}(aF+b C_0))$ with the homogeneous polynomials in the variables $t_0, \, t_1, \, x_0, \, x_1$ of  bidegree $(a, \, b)$ with respect to the grading indicated by the weight matrix \eqref{eq:weight_ matrix}:  $t_0$ and $t_1$ have bidegree $(1, \, 0)$, whereas $x_0$ has bidegree $(0, \, 1)$ and $x_1$ has bidegree $(n, \, 1)$. 

For simplicity we denote $H^0(\FF_n,  \, \mathcal{O}(aF+b C_0))$ by $H^0(\mathbb{F}_n, \, \mathcal{O}(a, \, b))$: the ruling is given by $H^0(\mathbb{F}_n, \, \mathcal{O}(1, \,0)) =\operatorname{span}(t_0, \, t_1)$, whereas $C_0$ is defined by the vanishing of $x_0 \in H^0(\mathbb{F}_n, \, \mathcal{O}(0,\,1))$.

\begin{lemma} \label{lem:fibre_irreducible}
It is possible to choose the branch curve $B \subset \mathbb{F}_n$ so that both conditions below are satisfied$:$
\begin{itemize}
\item[$(1)$] $B$ is smooth and everywhere tangent to $C_0;$
\item[$(2)$] every fibre of $f \colon S \to \mathbb{P}^1$ is irreducible.
\end{itemize}
\end{lemma}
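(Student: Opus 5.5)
We will exhibit an explicit branch curve. Since $S$ is a double cover of a smooth surface, $B$ must be even. We take $B$ in $|2aF+2bC_0|$ with $b\ge 2$, so that the general fibre of $f$ is hyperelliptic, and with $a\ge bn$ so that the linear system is base point free away from $C_0$. In the toric coordinates the equation of $B$ is a bihomogeneous polynomial $g(t_0,t_1,x_0,x_1)=\sum_{j=0}^{2b} c_j(t)\,x_0^{j}x_1^{2b-j}$, where $c_j$ has degree $2a-(2b-j)n$. Restricting to $C_0=\{x_0=0\}$ gives $c_0(t)x_1^{2b}$, so condition (1), tangency, amounts to requiring that $c_0(t)$ be a perfect square, say $c_0=q(t)^2$. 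We allow $q$ to be a nonzero constant when $2a=2bn$, in which case $C_0B=0$ and $n>0$ is needed to match Lemma \ref{lem:Picard_number_Cover_F_n}.

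\textbf{Step 1: smoothness.} The sublinear system $\Lambda\subset|2aF+2bC_0|$ of polynomials with $c_0=q^2$, where $q$ is fixed, is the translate of a linear system by a fixed element. We show that it has no base points off $C_0$, since there the monomials $x_0^{2b}t^{\,\cdot}$ already separate points. Bertini then gives smoothness of a general member away from $C_0$. Along $C_0$ we check smoothness directly: at a point of $C_0$ we have $\partial g/\partial x_0=c_1(t)x_1^{2b-1}$, and a general $c_1$ does not vanish at the finitely many zeros of $q$. Where $q\neq0$, the curve $B$ does not meet $C_0$ at all.

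\textbf{Step 2: irreducibility of fibres.} The fibre of $f$ over $t\in\mathbb P^1$ is the double cover of $F_t\cong\mathbb P^1$ branched along $B|_{F_t}$. Such a fibre is reducible exactly when the cover splits, that is, when $g(t,x_0,x_1)$ is a perfect square in $x_0,x_1$ (up to scalar), or when $F_t$ is a component of $B$. The latter is excluded by smoothness and irreducibility of $B$. So it suffices to arrange that, for every $t$, the binary form $g(t,\cdot)$ has at least one simple root.

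We impose a specific shape that makes this checkable via Lemma \ref{lem:polynomial-not-a-square}. We take
\begin{equation*}
g=x_0^{2b}\,\lambda(t)+x_0^{2b-1}x_1\,\mu(t)+q(t)^2x_1^{2b}+(\text{general terms}).
\end{equation*}
In the dehomogenized variable $x=x_0/x_1$ this reads $\lambda x^{2b}+\mu x^{2b-1}+q^2$. The cleanest approach is to take only these three terms, with $\lambda$ chosen without zeros; this is possible because $\lambda$ has degree $2a$, so we may choose $\lambda=t_0^{2a}+t_1^{2a}$ after rescaling, or ensure $\lambda$ and $q$ have no common zero. Where $\lambda(t)\ne0$, dividing by $\lambda$ gives the form $x^{2b}+\alpha x^{2b-1}+\beta$ with $(\alpha,\beta)\neq(0,0)$ as long as $\mu$ and $q$ have no common zero. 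Lemma \ref{lem:polynomial-not-a-square} then yields a simple root. Where $\lambda(t)=0$, the form is $x^{2b-1}(\mu x+q^2)$ in the appropriate chart; its root at infinity, $x_1=0$, is simple if $\mu(t)\neq0$. So it suffices to choose $\lambda,\mu,q$ pairwise without common zeros.

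\textbf{Main obstacle.} The difficulty is reconciling Step 2's restricted trinomial shape with Step 1's smoothness, since the trinomial system is far from base point free. I would first check smoothness of the trinomial curve directly. At a singular point, the partial derivatives force conditions such as $x_0^{2b-2}x_1(\ldots)=0$ together with $\lambda'x_0^{2b}+\mu'x_0^{2b-1}x_1+2qq'x_1^{2b}=0$. For general $\lambda,\mu,q$ with simple zeros, I expect these conditions to be incompatible; where needed, one uses the discriminant computation from Lemma \ref{lem:polynomial-not-a-square}, namely that a multiple root sits at $\gamma$. If this fails, the fallback is to add a small general perturbation from the base point free part. Irreducibility of all fibres is an open condition only fibrewise, but the locus of binary forms that are perfect squares has codimension $b\ge2$ in the space of fibre forms. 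Hence, for a general one-parameter family of them, the set of such curves avoids the square locus, and genericity can be argued by a dimension count.
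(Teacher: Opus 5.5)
\textbf{Verdict: the proposal has a genuine gap.} Your Steps 1 and 2 use the same ingredients as the paper. You fix $c_0=q^2$ and work in the resulting affine system. You apply Bertini away from the base points $\{x_0=0,\ q=0\}$ and check smoothness directly there. You then take an explicit trinomial member and handle its fibre forms with Lemma \ref{lem:polynomial-not-a-square}. What is missing is a single $B$ that satisfies (1) and (2) at once, and your first proposed fix fails.

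Since $b\ge 2$, your trinomial $\lambda x_0^{2b}+\mu x_0^{2b-1}x_1+q^2x_1^{2b}$ has $c_1\equiv 0$, so it violates your own Step 1 criterion. At every point with $x_0=0$ and $q(t)=0$, all partial derivatives vanish. Locally the equation is $q(t)^2+x^{2b-1}(\mu(t)+\lambda(t)x)$, which is an $A_{2b-2}$ singularity at a simple zero of $q$. So whenever $q$ is nonconstant, the trinomial is singular at all of its tangency points with $C_0$. The conditions you expected to be incompatible are in fact compatible. (The paper's explicit member puts the middle monomial at $x_0x_1^{2b-1}$ instead, so it is at least smooth along $C_0$; but the paper never needs its smoothness.)

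The missing idea is exactly what you deny when you say that irreducibility of all fibres is open ``only fibrewise''. The set of pairs $(B,t)\in\mathcal{L}\times\mathbb{P}^1$ (your $\Lambda$) such that $p_B|_{F_t}$ has no simple zero is closed. Since $\mathbb{P}^1$ is proper, its projection to $\mathcal{L}$ is closed. Hence ``every fibre form has a simple zero'' is a Zariski-open condition on $\mathcal{L}$. Your trinomial, singular or not, shows this open set is nonempty. It then meets the nonempty open smoothness locus from Step 1, because $\mathcal{L}$ is irreducible. That is how the paper concludes, and it makes both your perturbation and your dimension count unnecessary.

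The dimension count could be made rigorous through the incidence variety in $\mathcal{L}\times\mathbb{P}^1$. You would first have to check, for each fixed $t$, that $g\mapsto g(t,\cdot)$ pulls back the no-simple-root locus to codimension $\ge 2$ in $\mathcal{L}$. This includes the zeros of $q$, where the fibre form is forced to be divisible by $x_0$. The claim that ``a general one-parameter family avoids it'' does not apply as stated, because these families are far from general.

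Minor slips:
\begin{itemize}
\item A binary form of positive degree always has zeros, so $\lambda=t_0^{2a}+t_1^{2a}$ is not zero-free. This is harmless, since you treat $\lambda(t)=0$ separately anyway.
\item When $\lambda(t)=0$ the form is $x_1(\mu x_0^{2b-1}+q^2x_1^{2b-1})$, not $x^{2b-1}(\mu x+q^2)$. Your conclusion that $x_1=0$ is then a simple root when $\mu(t)\neq 0$ is still correct.
\item $F_t\not\subset B$ follows from smoothness alone, since $F_t\cdot(B-F_t)=2b>0$. So irreducibility of $B$, which you did not prove, is not needed.
\end{itemize}
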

\begin{proof}
We will consider suitable polynomials  $p_B \in H^0(\mathbb{F}_n, \, \mathcal{O}(2a+2bn, \, 2b))$ of the form
\begin{equation}\label{eq:pB}
p_B:=x_1^{2b}(t_0-c_1t_1)^2(t_0-c_2t_1)^2 \cdots (t_0-c_at_1)^2+x_0g(t_0,t_1; \, x_0,x_1),
\end{equation}
where $a \geq 0$, $b \geq2$ and  the $c_i$ are fixed  complex numbers.  The zero locus $B$ of $p_B$ is a divisor which is $2$-divisible in  $\operatorname{Pic}(\mathbb{F}_n)$, so there is a (unique, since $\operatorname{Pic}(\mathbb{F}_n)$ has no torsion) double cover of $\FF_n$ branched on $B$. 

As $g(t_0,t_1; \, x_0,x_1)$ varies, the polynomials of the form \eqref{eq:pB} define an affine subspace $\mathcal{L}$ of a linear system on $\FF_n$, such that every $B \in \mathcal{L}$ is everywhere tangent to $C_0$ at the same points
\begin{equation}
[t_0: t_1: x_0:x_1]=[c_i: 1: 0:1] \quad 1 \leq i \leq a,
\end{equation}
and the general $B$ is smooth there. As these points are the unique base points of $\mathcal{L}$\footnote{If $a=0$ then $B$ is disjoint from $C_0$, and  $\mathcal{L}$ is base-point free.}, by Bertini's theorem the general polynomial of the form \eqref{eq:pB} defines a smooth curve $B$. So $(1)$ is satisfied.

In order to verify $(2)$, it suffices to show that a general curve $B$ in the linear system $\mathcal{L}$ has at least one point of transversal intersection with every fibre of the ruling. In fact, an easy local computation shows that \(\pi ^{*}F\) is smooth at the preimage on \(S\) of such an intersection point. Since $B$ is the branch locus of $f \colon S \to \mathbb{F}_n$, if \(\pi ^{*}F\) consists of two components, they would necessarily intersect at that preimage, contradicting the smoothness of \(\pi ^{*}F\).

The locus of pairs $(B,t) \in \mathcal{L} \times \mathbb{P}^1$ such that the restriction of $p_B$ to $F_t$ has no simple zero is closed. Since the projection $\mathcal{L} \times \mathbb{P}^1 \rightarrow \mathcal{L}$ is proper, its image is closed as well. We will now construct an explicit member $B \in \mathcal{L}$ that does not belong to this image, and this will show that the desired property holds on a non-empty open subset of $\mathcal{L}$.

More precisely, we will prove that the curve \(B\) defined by the polynomial
\begin{equation*}
p_B:=x_1^{2b}(t_0-c_1t_1)^2(t_0-c_2t_1)^2 \cdots (t_0-c_at_1)^2+t_1^{2a+n}x_0x_1^{2b-1}+t_0^{2a+2bn}x_0^{2b},
\end{equation*}
where all $c_i$ are pairwise distinct and different from zero, intersects tranversally in at least one point every fibre of the ruling of $\mathbb{F}_n$.  

In fact, for a fixed $[\bar{t}_0 :\bar{t}_1] \in \mathbb{P}^1$, let us consider the specialization
\begin{equation}
p_B(\bar{t}_0, \,\bar{t}_1; \, x_0, \, x_1) = \tau x_1^{2b}+\alpha x_0x_1^{2b-1}+\beta x_0^{2b} \in \mathbb{C}[x_0, \, x_1],
\end{equation}
where 
\begin{equation}
\tau:=\prod_{i=1}^a(\bar{t}_0 - c_i\bar{t}_1)^2, \quad \alpha:=\bar{t}_1^{2a+n}, \quad \beta := \bar{t}_0^{2a+2bn}.
\end{equation}
Since all $c_i$ are different from zero, at most one among $\alpha, \beta$ and $\tau$ may vanish. 

It remains to prove that, for all $[\bar{t}_0 :\bar{t}_1] \in \mathbb{P}^1$, the  polynomial $p_B(\bar{t}_0, \,\bar{t}_1; \, x_0, \, x_1)$ has a linear factor appearing with multiplicity $1$. If $\beta=0$ the desired linear factor is $\tau x_1+\alpha x_0$, thus we may assume that  $\beta \neq 0$. 
Dehomogenizing with respect to $x_0$, i.e. setting $x:=x_1/x_0$, we get the polynomial 
\begin{equation}
p(x)=\tau x^{2b} + \alpha x^{2b-1} + \beta \in \mathbb{C}[x]
\end{equation}
If $\tau \neq 0$, up to rescaling we can suppose  $\tau=1$, and since $b\geq 2$, then  $p(x)$ has at least one simple root by Lemma \ref{lem:polynomial-not-a-square}.
Otherwise $\tau=0$, $(\alpha, \, \beta) \neq (0, \, 0)$ and $p(x)$  has only simple roots. This proves $(2)$.
\end{proof}

We can now state the main result of this section.
\begin{theorem} \label{thm:covering_Hirzebruch_non_Mori_Dream}
Assume $a, \, n \geq 0$, 
$a+n\geq 2$, and $b\ge 2$. Then the double cover $\pi \colon S \to \FF_n$, branched on a curve $B \in |2(a+bn)F+2bC_0|$ whose defining polynomial is general among those of the form \emph{\eqref{eq:pB}}, is a smooth minimal surface  whose pseudo-effective cone $\overline{\operatorname{Eff}}(S)$ is non-polyhedral. 
\end{theorem}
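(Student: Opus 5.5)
The plan is to combine the three lemmas of this section with Corollary \ref{cor:crit}, applied to the fibration $f \colon S \to \PP^1$ obtained by composing $\pi$ with the ruling.

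\textbf{Smoothness and $\rho(S)\ge 3$.} By Lemma \ref{lem:fibre_irreducible}, for a general polynomial of the form \eqref{eq:pB} the branch curve $B$ is smooth and everywhere tangent to $C_0$. Hence the double cover $S$ is smooth, and $B|_{C_0}=2\sum_i [c_i:1:0:1]$ is $2$-divisible. We check the hypotheses of Lemma \ref{lem:Picard_number_Cover_F_n}. Here $C_0B = C_0\cdot(2(a+bn)F+2bC_0)=2a+2bn-2bn=2a$. If $a=0$ we have $C_0B=0$, so we need $n>0$, which follows from $a+n\ge 2$. Lemma \ref{lem:Picard_number_Cover_F_n} then gives $\rho(S)\ge 3$.

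\textbf{Applying the criterion.} By part (2) of Lemma \ref{lem:fibre_irreducible}, every fibre of $f$ is irreducible, so $n(f)=0$. Since $\rho(S)\ge 3>2=n(f)+2$, Corollary \ref{cor:crit} shows that $\overline{\operatorname{Eff}}(S)$ is non-polyhedral. One point needs care: Lemma \ref{lem:fibre_irreducible} proves the two conditions by separate Zariski-open arguments on the affine space $\mathcal{L}$. Both are nonempty open conditions, so a general member satisfies both at once.

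\textbf{Minimality.} This is the step I expect to need the most attention. For a double cover branched on $B\equiv 2L$ with $L=(a+bn)F+bC_0$, we have $K_S=\pi^*(K_{\FF_n}+L)$. Using $K_{\FF_n}=-(n+2)F-2C_0$, this gives
\begin{equation}
K_S=\pi^*\bigl((a+bn-n-2)F+(b-2)C_0\bigr).
\end{equation}
Call the divisor in brackets $D=\alpha F+\beta C_0$, with $\alpha=a+bn-n-2$ and $\beta=b-2\ge 0$. On $\FF_n$, $D$ is nef exactly when $\beta\ge0$ and $D\cdot C_0=\alpha-n\beta\ge 0$. Here
\begin{equation}
\alpha-n\beta=a+bn-n-2-n(b-2)=a+n-2\ge 0.
\end{equation}
So $D$ is nef by the hypothesis $a+n\ge2$, and therefore $K_S=\pi^*D$ is nef. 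This means $S$ contains no $(-1)$-curves, i.e.\ $S$ is minimal. In the boundary case $b=2$, $a+n=2$, $D=0$ and $K_S$ is trivial; the surface is still minimal, and the argument above is unaffected, since minimality only uses nefness of $K_S$.
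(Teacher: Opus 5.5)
Your proposal is correct and follows the same route as the paper. You get $n(f)=0$ from Lemma~\ref{lem:fibre_irreducible} and $\rho(S)\ge 3$ from Lemma~\ref{lem:Picard_number_Cover_F_n}, then conclude with Corollary~\ref{cor:crit}, and you get minimality from the nefness of $K_S=\pi^*\bigl((a+bn-n-2)F+(b-2)C_0\bigr)$. You are in fact a bit more explicit than the paper: you check the extra hypothesis $n>0$ when $C_0B=2a=0$, that the two open conditions can be imposed simultaneously, and the inequality $D\cdot C_0=a+n-2\ge 0$.
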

\begin{proof}
Since $b\geq 2$, by Lemma \ref{lem:fibre_irreducible} every fibre of $f \colon S \to \mathbb{P}^1$ is irreducible, hence $n(f)=0$. Moreover, 
Lemma \ref{lem:Picard_number_Cover_F_n} applies and we deduce $\rho(S) \geq 3$. From Corollary \ref{cor:main_corollary}, it follows that the pseudo-effective cone of $S$ is non-polyhedral. Finally, $S$ is minimal because $K_S$ is numerical equivalent to the pullback of $((a-2)+(b-1)n)F+(b-2)C_0$, which is nef for $a+n\geq 2$ and $b\geq 2$. 
\end{proof}
Let us recall that a \emph{Horikawa surface} is a minimal surface $S$ of general type with $K_S^2 = 2p_g(S)-4$. Horikawa surfaces satisfy $p_g(S)\geq 3$ and $q(S)=0$; we refer to \cite{BPHV04}*{Chapter VII} and to the reference given therein for more details. 

\begin{corollary} \label{cor:Horikawa}
For all $p_g \geq 3$, there exist Horikawa surfaces with that $p_g$ and which are not Mori dream spaces,  as their pseudo-effective cone is non-polyhedral.
\end{corollary}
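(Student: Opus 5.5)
The plan is to realize Horikawa surfaces directly as the surfaces of Theorem \ref{thm:covering_Hirzebruch_non_Mori_Dream}, taking $b=3$. Horikawa's classical description already presents surfaces on the Noether line as double covers of Hirzebruch surfaces branched on a curve meeting the general fibre in $6$ points. So everything reduces to computing $K_S^2$ and $p_g(S)$ for the double cover $\pi\colon S\to\FF_n$ branched on $B\in|2L|$, where $L=(a+3n)F+3C_0$. We then choose $(a,n)$ so that every value $p_g\ge 3$ occurs while $a,n\ge 0$ and $a+n\ge 2$. Non-polyhedrality of $\psEff(S)$ and minimality then come for free from Theorem \ref{thm:covering_Hirzebruch_non_Mori_Dream}. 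The failure of the Mori dream property follows from Theorem \ref{thm:Mori_dream_surface} (or Corollary \ref{cor:crit}), once we note $q(S)=0$.

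The invariants come from the standard double cover formulas. We have $K_S=\pi^*(K_{\FF_n}+L)$, with $K_{\FF_n}=-2C_0-(n+2)F$, so $K_{\FF_n}+L=mF+C_0$ where $m:=a+2n-2$. Hence
\begin{equation}
K_S^2=2(mF+C_0)^2=2(2m-n).
\end{equation}
Moreover $\pi_*\mathcal O_S=\mathcal O_{\FF_n}\oplus\mathcal O_{\FF_n}(-L)$. This gives
\begin{equation}
p_g(S)=h^0(\FF_n,K_{\FF_n})+h^0(\FF_n,mF+C_0)=0+(m+1)+(m-n+1)=2m-n+2,
\end{equation}
valid as soon as $m\ge n-1$, using $\pi_*$ of the section ring $x_0,x_1$ of bidegrees $(0,1)$, $(n,1)$. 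Similarly
\begin{equation}
q(S)=h^1(\mathcal O_{\FF_n})+h^1(\mathcal O_{\FF_n}(-L))=0.
\end{equation}
The second term vanishes by Serre duality and Kawamata–Viehweg, or by a direct computation, since $L$ is nef and big. Thus $K_S^2=2p_g(S)-4$, and $S$ is of general type because $K_S$ is nef with $K_S^2>0$ once $p_g\ge 3$. So $S$ is a Horikawa surface.

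It remains to hit every $p_g\ge 3$. For $n=1$ we get $m=a$ and $p_g=2a+1$, and the constraint $a+n\ge 2$ means $a\ge 1$; this yields all odd $p_g\ge 3$. For $n=0$ we get $m=a-2$ and $p_g=2a-2$; taking $a\ge 3$ (so $a+n\ge 2$ holds) yields all even $p_g\ge 4$. In both cases $m\ge n-1$, so the $h^0$ count above is legitimate. The only step needing some care is this cohomology computation, in particular checking $h^0(mF+C_0)=(m+1)+(m-n+1)$ in the range considered and the vanishing giving $q=0$. These are routine on Hirzebruch surfaces via the toric description \eqref{eq:weight_ matrix}, so I expect no real obstacle.
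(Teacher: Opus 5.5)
Your proposal is correct and takes the same route as the paper: set $b=3$ in Theorem \ref{thm:covering_Hirzebruch_non_Mori_Dream} and check that $p_g=2a+3n-2$ and $K_S^2=2p_g-4$, with $K_S$ big exactly when $2a+3n\ge 5$. Then $n=1$ gives every odd $p_g\ge 3$ and $n=0$ gives every even $p_g\ge 4$. The only difference is that you write out the double-cover computations of $p_g$, $q$ and $K_S^2$, which the paper states without proof.
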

\begin{proof}
Choosing $a,n$ as in the assumptions of Theorem \ref{thm:covering_Hirzebruch_non_Mori_Dream} and $b=3$, we get a minimal surface $S$ with a non-polyhedral pseudo-effective cone. Such a surface has $p_g(S)=3n+2a-2$ and $K_S^2=2p_g(S)-4$, where $K_S$ is big when 
$3n+2a\geq 5$. 

Thus, it is sufficient to choose  $n \in \{0, \,1\}$ to obtain Horikawa surfaces, see also \cite{Hor76}*{Theorem 1.6}, with every possible  $p_g \geq  3$ and such that $\overline{\operatorname{Eff}}(S)$ is non-polyhedral. 
\end{proof}

\section{Surfaces in \texorpdfstring{$\mathbb{P}^3$}{P3} containing lines and Fermat surfaces} \label{sec:fermat_surfaces}

A natural class of surfaces to which one may try to apply Corollary \ref{cor:main_corollary} is the class of surfaces $S$ in $\mathbb{P}^3$ containing a line, since the pencil of planes containing the line defines a fibration $f \colon S \to \mathbb{P}^1$.

In fact,  let $H_S$ be its hyperplane section. Setting $d:=\deg S$, by adjunction we have $K_{S}=(d-4)H_{S}$. Denoting by $\ell$ a line in $S$,  $K_{S} \cdot \ell = d-4$ and the genus formula gives 
$\ell^2=2-d$,  so that
\begin{equation}
(H_{S} - \ell)^2 =(H_{S})^2 - 2 H_{S} \cdot \ell + \ell^2 = d - 2+(2-d) = 0.
\end{equation}
This means that  the residual pencil $|H_{S}- \ell|$ is without base points, and so defines a fibration $f \colon S \to \mathbb{P}^1$, whose general fibre is a plane curve of degree $d-1$.

We are interested in counting the irreducible components of these plane curves, which are always reduced as  observed in the following 
\begin{remark}\label{rem: the_fibres_are_reduced}
	The fibers of \(f\) are always reduced because \(S\) is a smooth surface of degree $d\geq 2$. Indeed, suppose by contradiction that a fiber of \(f\), obtained by cutting \(S\) with a hyperplane \(H\) containing the line \(\ell \), contains a multiple irreducible component \(\Gamma \). For any smooth point \(p\) of $\Gamma$, the hyperplane \(H\) must coincide with the tangent plane to \(S\) at \(p\). Consequently, \(H\) would be tangent to \(S\) along the entire curve \(\Gamma \). This implies that the Gauss map \(\gamma \colon S \to (\mathbb{P}^3)^*\) contracts \(\Gamma \) to a point, a contradiction with the fact that \(\gamma \) is a finite morphism \cite{Zak}.
 \end{remark} 
\begin{definition}
Let $C$ be a reduced plane curve. Consider its topological Euler characteristic $\chi_{\operatorname{top}} (C)$ and set 
\[
\mu(C):= \chi_{\operatorname{top}} (C) + (\deg C ) \cdot ( \deg C -3).
\] 
\end{definition}
By the standard theory of plane curves, $\mu(C)\ge 0$ and $C$ is smooth if and only if $\mu(C)=0$.
The number $\mu(C)$ measures the difference between the Euler characteristic of the curve and that of a smooth curve of the same degree. The next lemma shows a useful relation between $\mu$ and  the number of components of $C$.

\begin{lemma}\label{lem: mu vs k}
For a reduced plane curve $C$
\begin{equation}\label{eq: mu vs k}
\mu(C) \ge \frac{k-1}2 \, \deg C,
\end{equation}
where $k$ is the number of the irreducible components of $C$.
\end{lemma}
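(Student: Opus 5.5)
The plan is to express $\mu(C)$ in terms of local invariants of the singular points, and then to bound these from below using only the intersections between distinct components. Write $C=C_1\cup\dots\cup C_k$ with $C_i$ irreducible of degree $d_i$, $d=\sum_i d_i$, and let $\nu_i\colon \widetilde C_i\to C_i$ be the normalizations. For every singular point $p\in C$ let $r_p$ be the number of analytic branches of $C$ at $p$ and $\delta_p$ its $\delta$-invariant. We will use three standard facts:
\begin{itemize}
\item the topological formula $\chi_{\operatorname{top}}(C)=\sum_i\chi_{\operatorname{top}}(\widetilde C_i)-\sum_p (r_p-1)$, since the normalization $\widetilde C=\bigsqcup_i\widetilde C_i\to C$ identifies $r_p$ points to one;
\item the genus formula $2-2g(\widetilde C_i)=-d_i(d_i-3)+2\delta(C_i)$;
\item the additivity of $\delta$: the total $\delta$-invariant of $C$ is $\delta(C)=\sum_i\delta(C_i)+\sum_{i<j}d_id_j$, the second term coming from Bézout.
\end{itemize}

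Combining these with the identity $d(d-3)=\sum_i d_i(d_i-3)+2\sum_{i<j}d_id_j$, I expect a clean formula:
\begin{equation*}
\mu(C)=2\delta(C)-\sum_p (r_p-1)=\sum_p\bigl(2\delta_p-r_p+1\bigr).
\end{equation*}
This is the sum of the local Milnor numbers, by Milnor's formula, although we do not need that interpretation. It also recovers the facts quoted before the lemma: $\mu(C)\ge0$, with equality iff $C$ is smooth.

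Next, use the elementary local inequality $\delta_p\ge r_p-1$. It holds because $r_p$ branches through a point already contribute at least $r_p-1$ to $\delta_p$; for example, the pairwise intersection multiplicities of the branches sum to at least $\binom{r_p}{2}\ge r_p-1$. Hence each summand satisfies $2\delta_p-r_p+1\ge\delta_p$, and therefore
\begin{equation*}
\mu(C)\ge\delta(C)\ge\sum_{i<j}d_id_j .
\end{equation*}

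Finally, a purely arithmetic step:
\begin{equation*}
2\sum_{i<j}d_id_j=\sum_i d_i(d-d_i)\ge\sum_i(d-d_i)=(k-1)d,
\end{equation*}
using $d_i\ge1$ and $d-d_i\ge0$. Dividing by $2$ gives exactly \eqref{eq: mu vs k}.

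The main point requiring care is the first step, namely getting the bookkeeping right between $\chi_{\operatorname{top}}$, the normalization and the $\delta$-invariants (in particular the sign conventions and the identification $\mu(C)=\sum_p(2\delta_p-r_p+1)$). I would verify it on a nodal cubic and on three concurrent lines as sanity checks. The remaining inequalities are routine.
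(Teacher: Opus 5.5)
Your proposal is correct and follows the paper's proof essentially step by step: the identity $\mu(C)=\sum_p(2\delta_p-r_p+1)$, the bound $\delta_p\ge r_p-1$ giving $\mu(C)\ge\sum_p\delta_p$, the estimate $\sum_p\delta_p\ge\sum_{i<j}d_id_j$, and then the arithmetic. The differences are cosmetic. You get the third step from additivity of $\delta$ plus Bézout, whereas the paper uses $p_a(C)=\frac{(d-1)(d-2)}{2}$ with $g_i\le\frac{(d_i-1)(d_i-2)}{2}$, which is the same computation; and in the last step you use $d_i\ge 1$ where the paper uses $d-d_i\ge k-1$.
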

\begin{proof}
Write $C=\sum_{i=1}^k C_i$. Let $\nu \colon \tilde{C} \rightarrow C$ be the normalization of $C$ and set for all $p \in C$, $r_p:=  \# \nu^{-1}(p)$. Obviously
\begin{equation}\label{eq: e(C)}
\chi_{\operatorname{top}}(C)= \sum_{i=1}^k (2-2g_i) -\sum_{p \in \Sing C} (r_p-1)
\end{equation}
where $g_i$ is the genus of the normalization of $C_i$.

The natural inclusion ${\mathcal O}_C \hookrightarrow \nu_* {\mathcal O}_{\tilde{C}}$ has cokernel sheaf ${\mathcal Q}$ supported on $\Sing C$, so we can write ${\mathcal Q}=\oplus_{p \in \Sing C} {\mathcal Q}_p$ with ${\mathcal Q}_p$ supported on $p$. Setting $\delta_p:=$ length $ {\mathcal Q}_p$, by the short exact sequence
\begin{equation}
0 \rightarrow {\mathcal O}_C \rightarrow \nu_* {\mathcal O}_{\tilde{C}} \rightarrow {\mathcal Q} \rightarrow 0
\end{equation}
we obtain the formulas
\begin{equation}\label{eq:r vs delta}
\delta_p \ge r_p-1
\end{equation}
\begin{equation}\label{eq: pa}
p_a(C)=p_a(\tilde{C}) +\sum _{p \in \Sing C}\delta_p =1-k + \sum_{i=1}^k g_i +\sum_{p \in \Sing C} \delta_p 
\end{equation}
Then
\begin{equation}\label{eq: mu vs delta}
\mu(C)=\chi_{\operatorname{top}}(C)-(2-2p_a(C))=\sum_{p \in \Sing C} \left( 2\delta_p- r_p+1 \right) \ge \sum_{p \in \Sing C}  \delta_p
\end{equation}
Set now $d:=\deg C$ so that $p_a(C)=\frac{(d-1)(d-2)}2$.
Setting $d_i:=\deg C_i$ so that $d=\sum_1^k d_i$, by \eqref{eq: pa} we get
\begin{multline}\label{eq: delta vs di}
\sum _{p \in \Sing C}\delta_p=\frac{(d-1)(d-2)}2-\sum_{i=1}^k g_i  +k-1 \ge \\ 
\ge \frac{(d-1)(d-2)}2-\sum_{i=1}^k \frac{(d_i-1)(d_i-2)}2  +k-1=\frac12 \sum_{i \neq j} d_id_j
\end{multline}
Finally we observe
\begin{equation}\label{eq: di vs k}
\frac12 \sum_{i \neq j} d_id_j = \frac12 \sum_{i=1}^k d_i(d-d_i) \ge\frac12 \sum_{i=1}^k d_i(k-1)=\frac12 d (k-1)
\end{equation}
and then \eqref{eq: mu vs k} follows by \eqref{eq: mu vs delta}, \eqref{eq: delta vs di} and \eqref{eq: di vs k}.
\end{proof}

Our method gives now the following
\begin{theorem}\label{thm: surf with lines not MD}
Let $S\subset {\mathbb P}^3$ be a smooth surface of degree $d \ge 2$ containing a line $\ell$. 

If $\rho(S) > 2 \frac{(d-2)^2(d+2)}{d-1} +2$ then $S$ is not a Mori dream space, as its pseudo-effective cone is non-polyhedral. 
\end{theorem}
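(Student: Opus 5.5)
The plan is to apply Corollary \ref{cor:crit} to the fibration $f \colon S \to \mathbb{P}^1$ defined by the residual pencil $|H_S-\ell|$. It then suffices to prove the upper bound
\begin{equation*}
n(f) \le 2\,\frac{(d-2)^2(d+2)}{d-1}.
\end{equation*}
Once this holds, the hypothesis on $\rho(S)$ gives $\rho(S)>n(f)+2$, and Corollary \ref{cor:crit} yields both non-polyhedrality of $\psEff(S)$ and the failure of the Mori dream property.

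To bound $n(f)$ I will use the topological Euler characteristic.

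\emph{The fibres.} Every fibre $F_t$ is the residual curve $(H_t\cap S)-\ell$ in the plane $H_t$. It is therefore a plane curve of degree $d-1$, and it is reduced by Remark \ref{rem: the_fibres_are_reduced}.

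\emph{The Euler characteristic formula.} For a fibration onto $\mathbb{P}^1$ with smooth fibre $F$, the standard formula reads
\begin{equation*}
\chi_{\operatorname{top}}(S)=\chi_{\operatorname{top}}(\mathbb{P}^1)\chi_{\operatorname{top}}(F)+\sum_t\bigl(\chi_{\operatorname{top}}(F_t)-\chi_{\operatorname{top}}(F)\bigr).
\end{equation*}
A smooth plane curve of degree $d-1$ has $\chi_{\operatorname{top}}(F)=-(d-1)(d-4)$. Hence, for every fibre, $\chi_{\operatorname{top}}(F_t)-\chi_{\operatorname{top}}(F)=\mu(F_t)$ exactly, by the definition of $\mu$.

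\emph{Evaluating the sum.} For a smooth surface of degree $d$ in $\mathbb{P}^3$ one has $\chi_{\operatorname{top}}(S)=c_2(S)=d^3-4d^2+6d$. Substituting,
\begin{equation*}
\sum_t \mu(F_t)=d^3-4d^2+6d+2(d-1)(d-4)=d^3-2d^2-4d+8=(d-2)^2(d+2).
\end{equation*}

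\emph{Concluding the bound.} Lemma \ref{lem: mu vs k} applied to each fibre gives $\mu(F_t)\ge \frac{d-1}{2}\,(k_t(f)-1)$. Summing over $t\in\mathbb{P}^1$,
\begin{equation*}
\frac{d-1}{2}\,n(f)\le (d-2)^2(d+2),
\end{equation*}
which is exactly the required bound on $n(f)$.

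The main point needing care is the validity of the Euler characteristic formula for singular fibres. I will justify it by the usual argument: remove small tubular neighbourhoods of the finitely many singular fibres, use that $f$ is a topological fibre bundle over the complement, and use that each neighbourhood retracts onto its fibre. Since all fibres are reduced plane curves, $\mu(F_t)$ is defined for every $t$, and it vanishes precisely on the smooth fibres.
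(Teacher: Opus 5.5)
Your proof is correct and follows the paper's route: the paper also takes the fibration given by $|H_S-\ell|$, obtains $\sum_t \mu(F_t)=(d-2)^2(d+2)$ from the Zeuthen--Segre formula, and then uses the reducedness of the fibres, Lemma \ref{lem: mu vs k} and Corollary \ref{cor:crit}. The only difference is that it cites Zeuthen--Segre where you derive the same identity from the topological Euler characteristic formula.
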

\begin{proof}
We consider the fibration $f \colon S \rightarrow {\mathbb P}^1$ given by $|H_S- \ell |$. For all $t \in {\mathbb P}^1$ set $F_t$ for the corresponding fibre. All $F_t$ are plane curves of degree $d-1$.

By the Zeuthen-Segre formula, see \cite{BPHV04}*{Chapter III, Proposition 11.4 and Remark 11.5}
\begin{equation}
\sum_{t \in  {\mathbb P}^1} \mu (F_t)= d(d^2-4d+6)-2(2-(d-2)(d-3)) =(d-2)^2(d+2).
\end{equation}
Furthermore, all $F_t$ are reduced plane curves from Remark \ref{rem: the_fibres_are_reduced}, so we can apply Lemma \ref{lem: mu vs k} obtaining 
\begin{equation}
\sum_{t \in  {\mathbb P}^1} \mu (F_t) \ge \frac{n(f)}2 (d-1)
\end{equation}
and then $n(f) \leq 2 \frac{(d-2)^2(d+2)}{d-1}$. The claim now follows from Corollary \ref{cor:main_corollary}.
\end{proof}

We now apply Theorem \ref{thm: surf with lines not MD} to the \emph{Fermat surface} $S_d \subset \mathbb{P}^3$, defined by
\begin{equation} \label{eq:S_d}
x_0^d-x_1^d+x_2^d-x_3^d=0.
\end{equation}

We take the line $\ell  \subset S_d$, of equation
\begin{equation}
x_0-x_1=x_2-x_3=0
\end{equation}
and we consider the corresponding fibration 
\begin{equation} \label{eq:fibration_on_S_d}
f_d \colon S_d \to \mathbb{P}^1,
\end{equation}
whose general fibre $F_t$ consists of the movable part of the linear system cut out on $S_d$ by the pencil of planes
\begin{equation}
x_0-x_1=t(x_2-x_3).
\end{equation}

\begin{theorem}\label{thm: Fermat_surf_with_d_geq_4_are_not_MD}
	All Fermat surfaces $S_d\subseteq \mathbb P^3$ of degree $d\geq  4$ are not Mori dream spaces, as their pseudo-effective cones are non-polyhedral. 
\end{theorem}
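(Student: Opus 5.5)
The plan is to apply Theorem \ref{thm: surf with lines not MD} to $S_d$ with the line $\ell$ fixed above. This reduces the statement to a lower bound on $\rho(S_d)$ that beats
\begin{equation*}
\beta(d):=2\,\frac{(d-2)^2(d+2)}{d-1}+2 .
\end{equation*}
The fibration $f_d$ is already given by $|H_{S_d}-\ell|$, and $S_d$ is smooth of degree $d\ge 4$. So the hypotheses of Theorem \ref{thm: surf with lines not MD} are satisfied, and only the inequality $\rho(S_d)>\beta(d)$ needs to be checked.

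For the lower bound on the Picard number I would use the $3d^2$ lines on $S_d$. These are the lines $x_0=\zeta x_1,\ x_2=\eta x_3$ with $\zeta^d=\eta^d=1$, together with the analogous families for the other two pairings of the coordinates. By the classical results of Shioda and Aoki--Shioda, their classes span a sublattice of $N^1(S_d)$ of rank $3(d-1)(d-2)+1$, so
\begin{equation*}
\rho(S_d)\ \ge\ 3(d-1)(d-2)+1 ,
\end{equation*}
with equality when $\gcd(d,6)=1$. For $d=4$ this gives $\rho(S_4)=20$, the well-known value for the Fermat quartic, which is a K3 surface with maximal Picard number. If a self-contained argument were wanted, one could compute the Gram matrix of a suitable set of lines using $\ell^2=2-d$ and the fact that two distinct lines meet in $0$ or $1$ point. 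Computing its rank in general is the real content of Shioda's theorem, though, so I would cite it rather than redo it.

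It then remains to check the elementary inequality $3(d-1)(d-2)+1>\beta(d)$ for all $d\ge4$. Multiplying by $d-1>0$, this is equivalent to
\begin{equation*}
P(d):=(d-1)\bigl(3(d-1)(d-2)-1\bigr)-2(d-2)^2(d+2)>0 .
\end{equation*}
$P$ is a cubic with leading coefficient $1$. Evaluating gives $P(4)=3\cdot 17-48=3$, $P(5)=4\cdot 35-126=14$, $P(6)=5\cdot 59-256=39$. To conclude for all $d\ge 4$, I would show that $P$ is increasing on $[4,\infty)$, for example by checking that the increments $P(d+1)-P(d)$ are positive and growing (or that $P'(d)>0$ there). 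So $\rho(S_d)>\beta(d)$ for all $d\ge4$, and Theorem \ref{thm: surf with lines not MD} gives that $\overline{\operatorname{Eff}}(S_d)$ is non-polyhedral; hence $S_d$ is not a Mori dream space.

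The main obstacle is the Picard number input. The Zeuthen--Segre bound on $n(f_d)$ is already packaged in Theorem \ref{thm: surf with lines not MD}, so everything hinges on the lower bound $\rho(S_d)\ge 3(d-1)(d-2)+1$. The margin is tight for small $d$: for $d=4$ it is $20$ against $18$. An alternative would be to compute $n(f_d)$ exactly by finding the reducible fibres of the explicit pencil $x_0-x_1=t(x_2-x_3)$ directly. That would give a sharper inequality, but it costs a case analysis of the singular fibres, so I would fall back on it only if the general bound failed.
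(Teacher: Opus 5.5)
Your proposal is correct and follows the paper's proof: apply Theorem \ref{thm: surf with lines not MD} to the line $\ell$, use Shioda's bound $\rho(S_d)\ge 3(d-1)(d-2)+1$, and check the resulting cubic inequality. The only difference is cosmetic. The paper factors your polynomial as $P(d)=(d-3)(d^2-5d+7)$, whose quadratic factor has negative discriminant, so positivity for $d\ge 4$ is immediate without a monotonicity argument.
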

\begin{proof}

	By \cite{Shioda}*{formula 9} we have
	\begin{equation} \label{eq:rho_S_2m}
		\rho(S_{d})\geq 3(d-1)(d-2)+1.
	\end{equation}
	By Theorem \ref{thm: surf with lines not MD}, if
\begin{equation}\label{eq: fermat in}
3(d-1)(d-2)+1 > 2 \frac{(d-2)^2(d+2)}{d-1} +2
\end{equation}
then the pseudo-effective cone of $S_d$ is non-polyhedral.

Multiplying by $d-1$ \eqref{eq: fermat in} becomes
\[
(d-3) (d^2-5d+7) > 0 
\]
which is true for all $d \geq 4$.
\end{proof}

Note that Theorem \ref{thm: Fermat_surf_with_d_geq_4_are_not_MD} is sharp, as the Fermat surfaces $S_2$ and $S_3$ are del Pezzo surfaces, hence they are Mori dream spaces.

  \begin{remark}\label{rem: Fermat quartic}
It was  already known that the pseudo-effective cone  $\overline{\operatorname{Eff}}(S_4)$ of the Fermat quartic $S_4 \subset \mathbb{P}^3$ is non-polyhedral. In fact, as $S_4$ is a K3 surface with Picard number $20$, from a result  of Shioda and Inose \cite{Shioda-Inose}*{Theorem 5 in §5} (see also \cite{Segre}*{§§17–18}), its automorphism group is infinite. As $S_4$ is projective, by a result of Pjateckiĭ-Šapiro–Šafarevič, and Sterk \cite{Huybrechts}*{Chapter 8, §4, Corollary 4.7}  $S_4$ contains infinitely many $(-2)-$curves, which provide infinitely many extremal rays for its pseudo-effective cone.

There is another interesting fibration  $g\colon S_4 \to \mathbb{P}^1$ defined by the rational function
\begin{equation}
t=\frac{x_0^2-x_1^2}{x_2^2-x_3^2}=-\frac{x_2^2+x_3^2}{x_0^2+x_1^2}. 
\end{equation} 
This is an elliptic fibration having six singular fibres, each a cycle of four lines of Kodaira type $I_4$ \cite{RabThesis}*{Example p. 44}. Thus, we have 
\begin{equation}
	n(g)+2=6 \cdot (4-1) +2 = 20= \rho(S_4).
\end{equation}
This shows at once two interesting things:
\begin{enumerate} 
\item  the equality $\rho(S) = n(f)+2$ in Theorem \ref{thm: criterium} is necessary, but not sufficient, for the polyhedrality of $\overline{\operatorname{Eff}}(S)$;
\item if $\overline{\operatorname{Eff}}(S)$  is non-polyhedral then $S$ may admit two distinct fibrations with different values of $n(f)$: in fact $n(g)=18$ whereas $n(f_4) \le 16$. This is not possible if $\overline{\operatorname{Eff}}(S)$ is polyhedral by Theorem \ref{thm: criterium}.
\end{enumerate}
  \end{remark}

\section{Product-quotient surfaces}
If  $C$ is a smooth curve with $g(C) \geq 1$, we have seen (Proposition \ref{prop:CxC}) that $\psEff(C \times C)$ is non-polyhedral. However, since $q(C \times C)=2g(C) >0$, we do not need to look at the pseudo-effective cone to conclude that $C \times C$ is not a Mori dream space. 

To include regular surfaces in our analysis, we can slightly change the setting and investigate quotients of product of curves by the diagonal action of a finite group. These surfaces have been extensively studied in the literature \cites{Cat00, fano, isogenous, BaCaGrPi12,BaPi12, FG23,  Fede25, Fede24, MiPo10, Po10, Po15} and are known as \emph{product-quotient surfaces}. 

\begin{definition} \label{def:product-quotient_surfaces}
Let $C_1, \, C_2$ be two smooth projective curves and let $G$ be a finite group acting faithfully on both of them. A \emph{product-quotient} surface is the minimal resolution of singularities $\lambda \colon S \to X$ of the \emph{quotient model} $X=(C_1 \times C_2)/G$, where $G$ acts diagonally on the product.
\end{definition}
The surface $S$ comes naturally with two fibrations
\begin{equation}
f_1 \colon S \to C_1/G, \quad f_2 \colon S \to C_2/G,
\end{equation}
whose general fibre we denote by $F_1$ and $F_2$, respectively. 

We refer to  \cite{Ser96}*{Theorem 2.1}, where the author studied the 
fibres of $f_1$ and $f_2$. 
%

In particular, one can easily observe that 
\begin{equation}
F_1 \simeq C_2, \quad  F_2 \simeq C_1  \quad \textrm{and} \quad  F_1 F_2 = |G|,
\end{equation}
and both fibrations $f_1$ and $f_2$ are \emph{isotrivial}. Moreover
\begin{equation}
n(f_1)=n(f_2)=l(X)
\end{equation}
where $l(X)$ is the number of irreducible curves contracted by $\lambda$, see   \cite{BaPi16}*{Definition 2.3}. Moreover, by \cite{Ser96}*{Proposition 2.2} we get
\begin{proposition} \label{prop: irregularity of P.Q.}
The irregularity of a product-quotient surface equals the sum $g(C_1/G)+g(C_2/G)$.
\end{proposition}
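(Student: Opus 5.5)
The plan is to compute $h^1(S,\mathcal O_S)$ in three reductions: pass from $S$ to the quotient model $X=(C_1\times C_2)/G$, then to $G$-invariants on $C_1\times C_2$, and finally apply the Künneth formula.

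\emph{Step 1: from $S$ to $X$.} The singularities of $X$ are cyclic quotient singularities, hence rational, because $G$ acts with finite stabilizers. For a resolution $\lambda\colon S\to X$ of rational singularities we have $R^1\lambda_*\mathcal O_S=0$ and $\lambda_*\mathcal O_S=\mathcal O_X$. The Leray spectral sequence then gives $H^1(S,\mathcal O_S)\cong H^1(X,\mathcal O_X)$, and so $q(S)=q(X)$.

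\emph{Step 2: from $X$ to $C_1\times C_2$.} Let $p\colon C_1\times C_2\to X$ be the quotient map. It is finite, and $\mathcal O_X=(p_*\mathcal O_{C_1\times C_2})^G$. Since we work in characteristic zero, taking $G$-invariants is exact (Reynolds operator). Combining this with the vanishing of higher direct images under the finite map $p$, we get
\[
H^1(X,\mathcal O_X)\cong H^1(C_1\times C_2,\mathcal O)^G .
\]

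\emph{Step 3: Künneth.} By the Künneth formula,
\[
H^1(C_1\times C_2,\mathcal O)\cong \bigl(H^1(C_1,\mathcal O)\otimes H^0(C_2,\mathcal O)\bigr)\oplus\bigl(H^0(C_1,\mathcal O)\otimes H^1(C_2,\mathcal O)\bigr),
\]
and this decomposition is $G$-equivariant for the diagonal action. The group $G$ acts trivially on the constants $H^0(C_i,\mathcal O)$, so the invariant part is $H^1(C_1,\mathcal O)^G\oplus H^1(C_2,\mathcal O)^G$. Applying the same invariants argument to the curve quotients $C_i\to C_i/G$ gives $H^1(C_i,\mathcal O)^G\cong H^1(C_i/G,\mathcal O)$, whose dimension is $g(C_i/G)$. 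Adding the two summands proves the claim.

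The main obstacle I expect is Step 1, since it relies on the singularities being rational. This is standard for quotient singularities, but it should be cited explicitly, for instance via the fact that quotient singularities are rational (Brieskorn, or Viehweg), or more concretely via the Hirzebruch–Jung strings of rational curves that resolve them. An alternative route for Step 1 uses Hodge theory: $q=\tfrac12 b_1$, and $b_1(S)=b_1(X)$ because the exceptional curves are trees of $\mathbb P^1$'s. Then $H^1(X,\mathbb Q)=H^1(C_1\times C_2,\mathbb Q)^G$ gives the same count.
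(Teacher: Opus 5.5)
Your proof is correct and complete. The paper does not prove this statement. It simply quotes it from Serrano [Ser96, Prop.~2.2], so your write-up is a self-contained substitute for that citation rather than a competing route.

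Your argument is the standard one:
\begin{itemize}
\item rationality of the quotient singularities, together with Leray, gives $H^1(S,\mathcal O_S)\cong H^1(X,\mathcal O_X)$;
\item the Reynolds splitting of $p_*\mathcal O_{C_1\times C_2}$ identifies this with the $G$-invariant part of $H^1(C_1\times C_2,\mathcal O)$;
\item K\"unneth separates the two factors.
\end{itemize}
Your version makes explicit what the citation hides: the only inputs are the rationality of quotient singularities and the exactness of taking $G$-invariants in characteristic zero. It also gives the canonical decomposition $H^1(S,\mathcal O_S)\cong H^1(C_1,\mathcal O_{C_1})^G\oplus H^1(C_2,\mathcal O_{C_2})^G$.

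Your topological alternative for Step~1 is also sound, because the exceptional loci are Hirzebruch--Jung chains of smooth rational curves, so $b_1$ does not change under the resolution.

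One minor wording point in Step~1. Finiteness of the stabilizers is what makes the singularities quotient singularities. They are cyclic because $G_{(x,y)}\subseteq G_x$ and stabilizers of points on a curve are cyclic. You do not actually need cyclicity, since every quotient singularity is rational.
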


\subsection{Surfaces isogenous to a product, of unmixed type.} We first consider the case where the action of $G$ on $C_1 \times C_2$ is \emph{free}. In this situation $X$ is smooth and $S=X=(C_1 \times C_2)/G$ is called a \emph{surface isogenous to product, of unmixed type}. If $g(C_1)\geq 2$ and $g(C_2) \geq 2$, then $S$ is a minimal surface of general type.

\begin{proposition} \label{prop:cone_isogenous_non_polyhedral}
Let $S = (C_1 \times C_2)/G$ be  a surface isogenous to a product, of unmixed type.  If $\rho(S) \geq 3$, then  $\overline{\operatorname{Eff}}(S)$ is non-polyhedral. In particular,  $S$  is not a Mori dream space. 
\end{proposition}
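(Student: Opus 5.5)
We apply Corollary \ref{cor:crit} to the fibration $f_1 \colon S \to C_1/G$. The whole point is that, since $G$ acts freely on $C_1 \times C_2$, this fibration has no reducible fibres. Then $n(f_1)=0$, and the hypothesis $\rho(S)\geq 3$ is precisely the inequality $\rho(S) > n(f_1)+2$ required by the corollary.

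\textbf{Step 1: every fibre of $f_1$ is irreducible.} Since the action is free, $X=S$ is smooth and $\lambda$ contracts nothing, so $l(X)=0$. The identity $n(f_1)=l(X)$ recalled above then gives $n(f_1)=0$.

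This can also be seen directly. The fibre of $f_1$ over a point $[x] \in C_1/G$ is the image of $\{gx\}_{g\in G}\times C_2$ in $S$. This image is isomorphic to $C_2/\operatorname{Stab}(x)$, taken with multiplicity $|\operatorname{Stab}(x)|$. Since $\operatorname{Stab}(x)$ acts freely on $C_2$ (freeness of the diagonal action), the quotient $C_2/\operatorname{Stab}(x)$ is irreducible. So every fibre of $f_1$ is irreducible, possibly multiple, and by Definition \ref{def: N} all $k_p(f_1)=1$.

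\textbf{Step 2: conclude.} Since $\rho(S)\geq 3 > 2 = n(f_1)+2$, Corollary \ref{cor:crit} shows that $\overline{\operatorname{Eff}}(S)$ is non-polyhedral. Theorem \ref{thm:Mori_dream_surface} then shows that $S$ is not a Mori dream space.

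The second claim can also be obtained more cheaply when $q(S)>0$, directly from Definition \ref{def:Mori_dream_space}. The polyhedrality argument, however, covers all cases uniformly, including $q(S)=0$, which by Proposition \ref{prop: irregularity of P.Q.} is the case $C_i/G\cong\mathbb{P}^1$.

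The only step needing care is Step 1, namely justifying that the fibres are irreducible. One has to check that freeness of the diagonal action really forces the stabilizer of a point of $C_1$ to act freely on $C_2$. This holds because an element $g \in \operatorname{Stab}(x)$ with a fixed point $y\in C_2$ would fix $(x,y)$, contradicting freeness. Everything else is a direct application of the results already established.
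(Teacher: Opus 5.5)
Your proof is correct and follows the paper's route: the paper also uses freeness of the action, citing Serrano's description of the fibres of $f_i$ as multiples of smooth curves, to get $n(f_i)=0$, and then applies Corollary~\ref{cor:crit}. One small point in your direct argument: irreducibility of $C_2/\operatorname{Stab}(x)$ holds for any finite group action, so freeness is not needed for that step; what freeness actually guarantees is that $S$ is smooth and that each fibre is a multiple of a \emph{smooth} curve.
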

\begin{proof}
Since the action of $G$ on $C_1 \times C_2$ is free, by \cite{Ser96}*{Theorem 2.1} 
all the singular fibres of $f_i$  are multiple of smooth curves, hence $n(f_i) =0$ and the claim follows from Corollary \ref{cor:main_corollary}. 
\end{proof}

Recall that a \emph{non-trivial correspondence} in $C_1 \times C_2$ is a (possibly reducible) curve $\Gamma \subset C_1 \times C_2$ that dominates the bases of both natural projections $C_1 \times C_2 \to C_i$. 

\begin{proposition} \label{prop:cone_isogenous_non_polyhedral_correspondence}
Let $S = (C_1 \times C_2)/G$ be  a surface isogenous to a product, of unmixed type, and assume that there exists a non-trivial $G$-invariant correspondence $\Gamma \subset C_1 \times C_2$ such that $\Gamma^2 \le 0$. Then $\overline{\operatorname{Eff}}(S)$ is non-polyhedral. 
\end{proposition}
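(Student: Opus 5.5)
By Proposition \ref{prop:cone_isogenous_non_polyhedral}, it suffices to show that $\rho(S)\geq 3$. Recall that in this setting $n(f_1)=0$, since all singular fibres of $f_1$ are multiples of smooth curves, so Corollary \ref{cor:main_corollary} applies as soon as $\rho(S)\ge 3$. The natural candidates for three independent classes on $S$ are $F_1$, $F_2$, and the image $D:=\pi(\Gamma)$ of the correspondence, where $\pi\colon C_1\times C_2\to S$ is the quotient map. Since $\pi$ is étale of degree $|G|$ and $\Gamma$ is $G$-invariant, we have $\pi^*D=\Gamma$ as divisors. Hence all intersection numbers on $S$ are computed upstairs and divided by $|G|$. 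Namely, with $\Phi_1,\Phi_2$ the fibres of the two projections of $C_1\times C_2$, we have $\pi^*F_1=|G|\Phi_1$-type classes; more precisely, $\pi^*F_i$ is numerically $|G|$ times a fibre of the $i$-th projection.

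I would then compute the Gram matrix of $F_1,F_2,D$. Set $d_1:=\Gamma\Phi_1$ and $d_2:=\Gamma\Phi_2$; these are the degrees of the projections of $\Gamma$ onto the factors, both positive because the correspondence is non-trivial. Up to the common factor coming from $|G|$, the matrix takes the form
\begin{equation}
\begin{pmatrix}
0 & |G| & d_1' \\
|G| & 0 & d_2' \\
d_1' & d_2' & D^2
\end{pmatrix},
\end{equation}
where $d_i'=DF_i>0$ are positive multiples of $d_i$ and $D^2=\Gamma^2/|G|\le 0$. Its determinant equals $|G|\,(2d_1'd_2'-|G|D^2)$. This is strictly positive because $d_1'd_2'>0$ and $D^2\le 0$. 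Therefore $F_1,F_2,D$ are linearly independent in $N^1(S)$, which gives $\rho(S)\ge 3$, and Proposition \ref{prop:cone_isogenous_non_polyhedral} concludes the proof.

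An equivalent way to see the independence is to argue directly on $C_1\times C_2$: if $\Gamma\equiv a\Phi_1+b\Phi_2$, then $\Gamma^2=2ab\,\Phi_1\Phi_2$, and intersecting with $\Phi_1$ and $\Phi_2$ gives $b,a>0$. Hence $\Gamma^2>0$, contradicting the hypothesis $\Gamma^2\le 0$. Independence then descends to $S$ because $\pi^*$ is injective on $N^1$ for a finite map. I expect the only delicate point to be the bookkeeping of the factors of $|G|$. That bookkeeping does not affect signs, which is why I would present the argument in this cleaner upstairs form. A further point to check is that a reducible $\Gamma$ is harmless, since only its numerical class enters the argument.
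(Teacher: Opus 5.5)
Your proposal is correct and follows the paper's proof. You show that $F_1$, $F_2$ and $\pi(\Gamma)$ are linearly independent using only $\Gamma\cdot\Phi_i>0$ and $\Gamma^2\le 0$, and conclude via Proposition~\ref{prop:cone_isogenous_non_polyhedral}; your upstairs argument is essentially the paper's sign argument on a putative relation, and the Gram determinant $|G|(2d_1d_2-|G|D^2)$ is an equally valid repackaging, in which there is in fact no extra factor to track since $D\cdot F_i=\frac{1}{|G|}\,\Gamma\cdot|G|\Phi_i=d_i$, i.e.\ $d_i'=d_i$.
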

\begin{proof}
Let $\pi \colon C_1 \times C_2 \to S$ be the natural projection and set $B=\pi(\Gamma)$. Since $\Gamma$ is $G$-invariant and $\pi$ is unramified, we get $\pi^*(B)=\Gamma$. We claim that the three classes  
\begin{equation}
F_1, \; \; F_2, \;\; B
\end{equation}
 are linearly independent in $N^1(S)$. In fact, suppose that there exists a linear relation
\begin{equation}
\lambda_1 F_1 + \lambda_2 F_2 =\mu B,
\end{equation} 
  where $\lambda_1, \, \lambda_2, \, \mu \in \mathbb{R}$ and we can assume $\mu \geq 0$. Pulling back on $C_1 \times C_2$ via $\pi $, we get
\begin{equation}
\tau_1 C_2 + \tau_2 C_1 = \mu \Gamma,
\end{equation}  
 where $\tau_1 = \lambda_1 \cdot |G|$, $\tau_2 = \lambda_2 \cdot |G|$. Intersecting with $C_1$ we get $\tau_1 \geq 0$, and intersecting with $C_2$ we get $\tau_2 \geq 0$. Thus, intersecting with $\Gamma$ we get
\begin{equation}
 0 \leq (\tau_1 C_2 + \tau_2 C_1) \Gamma = \mu \Gamma^2 \le 0.
 \end{equation} 
 Then $(\tau_1 C_2 + \tau_2 C_1) \Gamma=0$. As $\Gamma$ is non-trivial,  both products $C_i \Gamma $ are positive, and then $\tau_1=\tau_2=\mu=0$ and so $\lambda_1=\lambda_2=0$. This proves our claim, hence $\rho(S) \geq 3$ and we can conclude by using Proposition \ref{prop:cone_isogenous_non_polyhedral}.
  \end{proof}

\begin{remark} \label{rmk:invariance_essential}
In Proposition \ref{prop:cone_isogenous_non_polyhedral_correspondence}, the $G$-invariance condition for  $\Gamma$ is an essential one. In fact, if $C_1 = C_2 =C$, then the 
diagonal $\Delta \subset C \times C$ is a correspondence having self-intersection $\Delta^2=2-2g(C)$, which is negative as soon as $g(C) \geq 2$. However, there are examples of diagonal $G$-actions where $\Delta$ is not $G$-invariant and $S=(C \times C)/G$ is a Mori dream space; one of them is Beauville's surface described in Remark \ref{rmk:7_is_necessary}.
\end{remark}

We now give some examples of surfaces to which Proposition \ref{prop:cone_isogenous_non_polyhedral_correspondence} may be applied. The first example is due to J. Starr, see \cite{MO2026}.

\begin{exampleJS} \label{ex:JS}
Let $p\geq 7$ be a prime number and consider the finite group $G=\mathbb{Z}_p \times \mathbb{Z}_p$. The three elements
\begin{equation}
g_1=(1, \, 0), \quad g_2=(0, \, -1), \quad g_3=(-1, \, 1)
\end{equation}
generate $G$ and satisfy $g_1+g_2+g_3=0$, hence there is an associated $G$-cover
\begin{equation}
\pi \colon C \to \mathbb{P}^1,
\end{equation}
branched over $0, \, 1, \, \infty$ such that 
\begin{itemize}
\item the stabilizer at every preimage of $0$ is $\langle g_1 \rangle$;
\item the stabilizer at every preimage of $1$ is $\langle g_2 \rangle$;
\item the stabilizer at every preimage of $\infty$ is $\langle g_3 \rangle$.
\end{itemize}
By the Riemann-Hurwitz formula, $g(C)= \frac{1}{2} \, (p-1)(p-2)$. Now, let us consider the automorphism 
$\phi \colon G \to G$ given by the matrix
\begin{equation}
\begin{pmatrix}
2 & -3 \\
1 & -2
\end{pmatrix} \in \operatorname{GL}_2(\mathbb{Z}_p).
\end{equation}
We have
\begin{equation}
\phi(g_1)=(2, \, 1), \quad \phi(g_2)=(3, \, 2), \quad \phi(g_3)=(-5, \, -3)
\end{equation}
and so, since $p \ge 7$,
\begin{equation} \label{eq:intersection_stabilizers}
\langle g_i \rangle \cap \langle \phi(g_j) \rangle = \{(0,0)\} \quad \textrm{for all } i, \,j.  
\end{equation}
Therefore the action of $G$ on $C \times C$ defined by
\begin{equation}
g \cdot (x, \, y) := (g \cdot x, \; \phi(g) \cdot y)
\end{equation}
is free, and we obtain a surface isogenous to a product, of unmixed type
\begin{equation}
S = (C \times C)/G,
\end{equation}
which is minimal, of general type and satisfies $q(S)=0$ by Proposition \ref{prop: irregularity of P.Q.}.

Moreover (and this will be relevant in the sequel) the element $1 \in \mathbb{Z}_p$ is an eigenvalue for $\phi$, and we have
\begin{equation}
\ker(\phi - \operatorname{id}) = \langle (3,\, 1) \rangle, \quad \operatorname{im}(\phi - \operatorname{id}) = \langle (1,\, 1) \rangle.
\end{equation}
For any $g \in G$, we can consider the graph of $g$
\begin{equation} \label{eq:Gamma_g}
\Gamma_g = \{ (x, \; g\cdot x) \; \; | \; \; x \in C \} \subset C \times C.
\end{equation}
It is a smooth divisor isomorphic to $C$ and such that $\Gamma_g^2=2-2g(C)$. Note that $\Gamma_{(0, \, 0)}$ is the diagonal $\Delta \subset C \times C$, and that 
\begin{equation} \label{eq:Gamma_intersections}
\Gamma_{h_1} \cap \Gamma_{h_2} \neq \emptyset \quad \textrm{if and only if} \quad h_1h_2^{-1} \in \langle g_1 \rangle \cup \langle g_2 \rangle \cup \langle g_3 \rangle. 
\end{equation}
Moreover, for all $g \in G$, we have
\begin{equation}
\begin{split}
g \cdot \Delta & = \{(g \cdot x, \; \phi(g) \cdot x) \; \; | \; \; x \in C \}  \\
 & = \{ (y, \; \phi(g) g^{-1} \cdot y)  \; \; | \; \; y \in C \} = \Gamma_{\phi(g)g^{-1}}.
\end{split}
\end{equation}

\begin{theorem} \label{thm:surface_JS_non_mori_dream}
For all prime numbers $p \geq 7$, the surface $S$ constructed above is not a Mori dream space, as its pseudo-effective cone is non-polyhedral.
\end{theorem}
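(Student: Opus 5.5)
Since the action is free, we will apply Proposition \ref{prop:cone_isogenous_non_polyhedral_correspondence}. We need a non-trivial $G$-invariant correspondence $\Gamma\subset C\times C$ with $\Gamma^2\le 0$. The natural candidate is the $G$-orbit of the diagonal:
\begin{equation}
\Gamma:=\bigcup_{g\in G} g\cdot\Delta=\bigcup_{g\in G}\Gamma_{\phi(g)g^{-1}}=\sum_{h\in \operatorname{im}(\phi-\operatorname{id})}\Gamma_h .
\end{equation}
By construction $\Gamma$ is $G$-invariant, and it is a non-trivial correspondence because each $\Gamma_h$ is a graph. Writing the group additively, $\phi(g)g^{-1}=(\phi-\operatorname{id})(g)$. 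Hence the distinct components are the $p$ graphs $\Gamma_h$ with $h\in\langle(1,1)\rangle$, and each appears once as a reduced curve. The stabilizer $\ker(\phi-\operatorname{id})=\langle(3,1)\rangle$ accounts for the multiplicity $p$ of the orbit map, but we take the reduced union.

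Next we compute $\Gamma^2=\sum_h\Gamma_h^2+\sum_{h_1\ne h_2}\Gamma_{h_1}\Gamma_{h_2}$. Each term $\Gamma_h^2=2-2g(C)=-(p-1)(p-2)+2$ is negative. For the mixed terms we use \eqref{eq:Gamma_intersections}: $\Gamma_{h_1}$ and $\Gamma_{h_2}$ meet only if $h_1-h_2\in\langle g_1\rangle\cup\langle g_2\rangle\cup\langle g_3\rangle$. Here $h_1-h_2$ is a nonzero element of $\langle(1,1)\rangle$, while $\langle g_1\rangle=\langle(1,0)\rangle$, $\langle g_2\rangle=\langle(0,1)\rangle$ and $\langle g_3\rangle=\langle(-1,1)\rangle$. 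Since $p\neq 2$, the line $\langle(1,1)\rangle$ meets none of these three lines outside $0$. So distinct components are disjoint, and
\begin{equation}
\Gamma^2=p\,(2-2g(C))<0.
\end{equation}
Proposition \ref{prop:cone_isogenous_non_polyhedral_correspondence} then shows that $\overline{\operatorname{Eff}}(S)$ is non-polyhedral. Since $q(S)=0$, Theorem \ref{thm:Mori_dream_surface} shows that $S$ is not a Mori dream space.

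The main point to check carefully is the justification of \eqref{eq:Gamma_intersections}. A point $(x,h_1x)=(x,h_2x)$ requires $h_1^{-1}h_2$ to fix $x$, and the stabilizers are exactly the cyclic groups $\langle g_i\rangle$ and their conjugates, which coincide with them since $G$ is abelian. The remaining steps are bookkeeping: the orbit of $\Delta$ is a reduced union of graphs, and the identification $\operatorname{im}(\phi-\operatorname{id})=\langle(1,1)\rangle$ is a direct $2\times 2$ computation. Even without disjointness, the inequality $\Gamma^2\le 0$ would follow from $\Gamma_{h_1}\Gamma_{h_2}$ being small, but disjointness makes it immediate.
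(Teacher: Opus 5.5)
Your proposal is correct and is essentially the paper's own proof. You use the same $G$-invariant correspondence $\Gamma=\sum_{h\in\operatorname{im}(\phi-\operatorname{id})}\Gamma_h$ (the reduced orbit of $\Delta$, which the paper records as $\sum_{g\in G} g\cdot\Delta=p\Gamma$), show that its $p$ components are pairwise disjoint because $\langle(1,1)\rangle$ meets each stabilizer $\langle g_i\rangle$ trivially (your explicit use of $p\neq 2$ for $\langle(-1,1)\rangle$ is a detail the paper leaves implicit), obtain $\Gamma^2=p(2-2g(C))<0$, and conclude with Proposition \ref{prop:cone_isogenous_non_polyhedral_correspondence}.
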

\begin{proof}
Let us consider the correspondence
\begin{equation}
\Gamma :=  \sum_{h \in \operatorname{im}(\phi - \operatorname{id})} \Gamma_{h}.
\end{equation}
It is $G$-invariant since
\begin{equation}
\sum_{g \in G} g \cdot \Delta =  \sum_{g \in G} \Gamma_{\phi(g)g^{-1}} = p\Gamma .
\end{equation}
Since $\operatorname{im}(\phi - \operatorname{id})$ intersects each stabilizer trivially, for distinct $h_1,h_2 \in \operatorname{im}(\phi - \operatorname{id})$ relation  \eqref{eq:Gamma_intersections} gives $\Gamma_{h_1} \Gamma_{h_2}=0$. Thus we infer 
\begin{equation}
\Gamma^2= \left(\sum_{h \in \operatorname{im}(\phi - \operatorname{id})} \Gamma_{h} \right)^2 = \sum_{h \in \operatorname{im}(\phi - \operatorname{id})} \Gamma_h^2 = p(2-2g(C))<0,
\end{equation}
and the claim follows from Proposition \ref{prop:cone_isogenous_non_polyhedral_correspondence}.
\end{proof}
\end{exampleJS}

\begin{remark} \label{rmk:7_is_necessary}
For $p \leq 5$ it is not possible to find a matrix $A \in \operatorname{GL}_2(\mathbb{Z}_p)$ fulfilling all the conditions in the example above. In fact, for $p \leq 3$ we can't even make the diagonal action free. For $p=5$ this is instead possible, and we get the famous example with $p_g=q=0$, $K^2=8$ presented by Beauville in \cite{Beau96}*{Ex. X.13(4)}. However, for such a surface $1$ is not an eigenvalue of the matrix $A$, so the construction in the example does not apply. Indeed, it was proved that Beauville's example is a Mori dream space, see \cite{KL19}*{Prop. 3.4}.
\end{remark}

More examples of surfaces to which Proposition \ref{prop:cone_isogenous_non_polyhedral_correspondence} may be applied can be constructed as follows.

\begin{proposition} \label{prop:cone_isogneous_non_polyhedral_via_factorization}
Let $S=(C_1 \times C_2)/G$ be a surface isogenous to a product, of unmixed type, let $H$ be a normal subgroup of $G$ and set $Q$ for the group quotient 
$G/H$.

Assume that the two $Q$-covers 
\begin{equation}
\eta_i \colon C_i/H \to C_i/G, \quad i=1, \, 2 
\end{equation}
are isomorphic, and $g(C_i/H) \geq 1$. Then $S$ is not a Mori dream space, as its pseudo-effective cone is non-polyhedral.

\end{proposition}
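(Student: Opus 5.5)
We will reduce the statement to Proposition \ref{prop:cone_isogenous_non_polyhedral_correspondence} by producing a non-trivial $G$-invariant correspondence $\Gamma \subset C_1 \times C_2$ with $\Gamma^2 \le 0$. Set $D_i := C_i/H$ and let $p_i \colon C_i \to D_i$ be the quotient maps. The hypothesis gives a $Q$-equivariant isomorphism $\psi \colon D_1 \to D_2$ with $\eta_2 \circ \psi = \eta_1$. Let $\Gamma_\psi \subset D_1 \times D_2$ be its graph. It is a smooth curve isomorphic to $D_1$, and $\Gamma_\psi^2 = 2-2g(D_1) \le 0$ because $g(D_1) \ge 1$. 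Since $\psi$ is $Q$-equivariant, $\Gamma_\psi$ is invariant under the diagonal action of $Q$ on $D_1 \times D_2$.

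Next we pull back along $p := p_1 \times p_2 \colon C_1 \times C_2 \to D_1 \times D_2$ and set $\Gamma := p^*\Gamma_\psi$. The map $p$ is the quotient by $H \times H$, and it is finite of degree $|H|^2$. The diagonal $G$-action on $C_1 \times C_2$ descends, via $G \to Q$, to the diagonal $Q$-action on $D_1 \times D_2$, so $p$ is $G$-equivariant. Hence $\Gamma$ is $G$-invariant.

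Then we check the two required properties.
\begin{itemize}
\item $\Gamma$ is a non-trivial correspondence. Each component of $\Gamma$ maps onto $\Gamma_\psi$ because $p$ is finite, and $\Gamma_\psi$ dominates both $D_1$ and $D_2$; so every component of $\Gamma$ dominates both $C_1$ and $C_2$.
\item $\Gamma^2 \le 0$. By the projection formula for finite maps, $\Gamma^2 = (p^*\Gamma_\psi)^2 = \deg(p)\,\Gamma_\psi^2 = |H|^2(2-2g(D_1)) \le 0$. This holds regardless of whether $p^*\Gamma_\psi$ is reduced, since Proposition \ref{prop:cone_isogenous_non_polyhedral_correspondence} only uses $\Gamma$ as a divisor.
\end{itemize}

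The main obstacle is that the proof of Proposition \ref{prop:cone_isogenous_non_polyhedral_correspondence} uses $\pi^*(B) = \Gamma$, where $B = \pi(\Gamma)$ with its reduced structure. If $p^*\Gamma_\psi$ is non-reduced, or if $G$ permutes its components with different multiplicities, this identity can fail. To avoid the issue, I would take instead $\Gamma := (p^*\Gamma_\psi)_{\mathrm{red}}$, which is still $G$-invariant. If $\Gamma_i$ are its components with multiplicities $m_i \ge 1$ in $p^*\Gamma_\psi$, then $\Gamma \cdot \Gamma_i \le 0$ would follow from $p^*\Gamma_\psi \cdot \Gamma_i \le 0$ only when all $m_i$ are equal. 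Alternatively, and more simply, I would rerun the linear-independence argument directly with the $G$-invariant divisor $p^*\Gamma_\psi$. It descends to a $\mathbb{Q}$-divisor class on $S$ because $\pi$ is étale and $p^*\Gamma_\psi$ is $G$-invariant. The intersection computation in that proof only needs $\tau_1 C_2 + \tau_2 C_1 = \mu\,p^*\Gamma_\psi$ together with $(p^*\Gamma_\psi)^2 \le 0$. This gives $\rho(S) \ge 3$, and then Proposition \ref{prop:cone_isogenous_non_polyhedral} concludes that $\overline{\operatorname{Eff}}(S)$ is non-polyhedral.
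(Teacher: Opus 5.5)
Your proposal is correct and follows essentially the same route as the paper: use the $Q$-equivariant identification of $C_1/H$ and $C_2/H$, pull back the $Q$-invariant graph (the paper's diagonal $\Delta$) along the $(H\times H)$-cover $C_1\times C_2\to C\times C$ to get a $G$-invariant non-trivial correspondence with $\Gamma^2=|H|^2(2-2g(C))\le 0$, and apply Proposition \ref{prop:cone_isogenous_non_polyhedral_correspondence}. The reducedness issue you raise does not actually arise: $p_1\times p_2$ ramifies only along fibres of the two projections, so it is \'etale at the generic point of every component of $p^*\Gamma_\psi$, which makes $p^*\Gamma_\psi$ automatically reduced and gives $\pi^*(B)=\Gamma$; your fallback of running the independence argument directly with the $G$-invariant divisor is also valid.
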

\begin{proof}
After choosing an isomorphism of the two $Q-$covers, we identify $C=C_1/H=C_2/H$ and $\eta = \eta_1 = \eta_2$. By assumption there are two factorizations
\begin{equation} \label{eq:factorization_C_i}
\begin{tikzcd}[row sep=3em, column sep=1em]
    C_1 \arrow[rr, "\rho_1"] \arrow[swap, dr, "\varphi_1"] & & C \arrow[dl, "\eta"] \\
    & C_1/G \\[-4.6em]
 \end{tikzcd}
\qquad
\begin{tikzcd}[row sep=3em, column sep=1em]
    C_2 \arrow[rr, "\rho_2"] \arrow[swap, dr, "\varphi_2"] & & C \arrow[dl, "\eta"] \\
    & C_2/G\\[-4.6em]
 \end{tikzcd}
\end{equation}
\medskip

of the $G$-covers $\varphi_i \colon C_i \to C_i/G$. 
These factorizations induce in turn a commutative diagram
\begin{equation}
\begin{tikzcd}
C_1 \times C_2 \arrow[r, "\rho"] \arrow[swap, d, "\pi"] & C \times C\arrow[d, "\bar{\pi}"]\\
S=(C_1 \times C_2)/G \arrow[swap, r, "\bar{\rho}"] & (C \times C)/Q \mathrlap{,}
\end{tikzcd}
\end{equation}
where $\rho = \rho_1 \times \rho_2$ is an $(H \times H)$-cover and $Q$ acts diagonally on $C \times C$. Since the action of $Q$ on the two factors of $C \times C$ is the same, the diagonal $\Delta \subset C \times C$ is $Q$-invariant. Moreover, the surface $(C \times C)/Q$ has only isolated cyclic quotient singularities, hence it is $\mathbb{Q}$-factorial and the pull-back of its Weil divisors is well-defined. Setting 
\begin{equation}
\Gamma = \rho^*(\Delta) \subset C_1 \times C_2 , \quad R = \bar{\pi}(\Delta) \subset (C \times C)/Q, 
\end{equation}
we get
\begin{equation} \label{eq:Gamma_G_invariant}
\Gamma = \rho^* (\Delta) = \rho^*(\bar{\pi}^*R) = \pi^*(\bar{\rho}^*R),
\end{equation}
that is, the correspondence $\Gamma$ is $G$-invariant. Finally, since $g(C) \geq 1$ we have $\Delta^2 \le 0$ and consequently $\Gamma^2 \le 0$, so we can conclude using Proposition \ref{prop:cone_isogenous_non_polyhedral_correspondence}.   
\end{proof}

We use it to construct a family of more examples: the case $p=3$ was explained to us by W. Sawin in \cite{MO2026}.

\begin{exampleWS}
Let $p$ be a prime number and take three homogeneous polynomials $f(x_0,\, x_1), \, g(x_0,\,x_1), \, h(x_0, \, x_1)$ of degree $p$ such that the product $fgh$ has $3p$ distinct roots in ${\mathbb P}^1$.

Take ${\mathbb P}^3$  with homogeneous coordinates $x_0,\,x_1,\,y_0,\,y_1$ and let $C_1 \subset {\mathbb P}^3$ be the curve  
\begin{equation} \label{eq:WS_C1}
\begin{cases}
y_0^p=f & \\
y_1^p=g & \\
\end{cases}
\end{equation}
The curve $C_1$ is smooth by the assumption on $fgh$. We consider the faithful action of $ G:=\mathbb{Z}_p \times \mathbb{Z}_p $ on $C_1$ induced by the following action on ${\mathbb P}^3$:
\begin{equation}\label{eq:action of G 1}
(a,b)  \cdot (x_0,\, x_1, \, y_0, \, y_1)=(x_0, \, x_1,\, \xi^a y_0, \, \xi^b y_1)
\end{equation}  
where $\xi = e^{2 \pi i /p}$. The induced $G$-cover
$
\varphi_1 \colon C_1 \to C_1/G \simeq \mathbb{P}^1
$
is simply given by $(x_0, \, x_1, \, y_0, \, y_1) \mapsto (x_0,\,x_1)$.

Take now the weighted projective space ${\mathbb P}(1,\,1,\,1,\,2)$ with weighted coordinates $x_0,\,x_1,\,z_0,\,z_1$: here $z_1$ is the variable of weight $2$. Let $C_2 \subset {\mathbb P}(1,\,1,\,1,\,2)$  be the curve
\begin{equation} \label{eq:WS_C2}
\begin{cases}
z_0^p=h & \\
z_1^p=fg & \\
\end{cases}
\end{equation}
The curve $C_2$ is smooth by the assumption on $fgh$.
We consider the action of $G $ on $C_2$ induced by the following action on ${\mathbb P}(1,\,1,\,1,\,2)$:  
\begin{equation}\label{eq:action of G 2}
(a,\,b)  \cdot (x_0, \,x_1,\,z_0,\,z_1)=(x_0, \,x_1,\, \xi^{a-b} z_0,\, \xi^{a+b} z_1)
\end{equation}  
Here we need to assume $p \ge 3$ to ensure that the action \eqref{eq:action of G 2} is faithful, obtaining a further $G$-cover
$
\varphi_2 \colon C_2\to C_2/G \simeq \mathbb{P}^1
$
also given by  $(x_0,x_1)$. Note that if $p=2$ then the element $(1,\,1) \in G$ would act trivially on $C_2$.

Set $H$ for the cyclic subgroup of $G$ generated by $(1,-1)$. 

The subring of the elements of ${\mathbb C}[x_0, \, x_1, \, y_0, \, y_1]/(y_0^p-f, \, y_1^p-g)$ that are $H$-invariant by the action \eqref{eq:action of G 1} is generated by $x_0$, $x_1$, and $y_0y_1$.  Setting $y:=y_0y_1$ we can write $C_1/H$ as the smooth curve $\{y^p=fg \} \subset {\mathbb P}(1,\, 1,\,2)$ and the cover $\eta_1 \colon C_1/H \rightarrow {\mathbb P}^1$ is given again by $(x_0, \, x_1, \, y_0, \, y_1) \mapsto (x_0,\,x_1)$.

The subring of the elements of ${\mathbb C}[x_0, \, x_1, \, z_0, \,z_1]/(z_0^p-h,\, z_1^p-fg)$ that are $H$-invariant with respect to the action \eqref{eq:action of G 2} is generated by $x_0$, $x_1$ and $z_1$:  we can write $C_2/H$ as the smooth curve $\{z_1^p=fg \} \subset {\mathbb P}(1,1,2)$ and the cover $\eta_2 \colon C_2/H \rightarrow {\mathbb P}^1$ is given again by $(x_0, \, x_1, \, y_0, \, y_1) \mapsto (x_0,\,x_1)$. This shows that the $G/H$-covers $\eta_1$ and $\eta_2$ are isomorphic.

\begin{theorem} \label{thm:surface_WS_non_mori_dream}
For all prime numbers $p \geq 3$, consider the actions of $G$ on $C_1$ and $C_2$ in \eqref{eq:action of G 1} and \eqref{eq:action of G 2}, and let $S$ be the surface $S=(C_1 \times C_2)/G$ given by the diagonal action. Then $S$ is a regular surface isogenous to a product, of unmixed type, which is not a Mori dream space, as its pseudo-effective cone is non-polyhedral.
\end{theorem}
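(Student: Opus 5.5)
The plan is to verify the hypotheses of Proposition \ref{prop:cone_isogneous_non_polyhedral_via_factorization} with $H=\langle (1,-1)\rangle$ and $Q=G/H\cong\mathbb{Z}_p$, after checking that $S$ is indeed a regular surface isogenous to a product. There are three steps.

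First, freeness of the diagonal action. The stabilizers of points of $C_1$ are the inertia groups over the branch points of $\varphi_1$. Over the roots of $f$ only $y_0$ vanishes, so the stabilizer is $\langle(0,1)\rangle$ (the elements fixing $y_0=0$ and acting trivially on $y_1$, up to the projective rescaling, which is trivial here since $x_0,x_1$ are fixed). Over the roots of $g$ the stabilizer is $\langle(1,0)\rangle$, and over the roots of $h$ it is trivial. On $C_2$ we read off the stabilizers from \eqref{eq:action of G 2}:
\begin{itemize}
\item over the roots of $h$ the coordinate $z_0$ vanishes, so the stabilizer is $\{(a,b): a+b\equiv 0\}=\langle(1,-1)\rangle$;
\item over the roots of $f$ or $g$ the coordinate $z_1$ vanishes, so the stabilizer is $\{a-b\equiv 0\}=\langle(1,1)\rangle$.
\end{itemize}
Here we use $p\ge3$ and the weighted rescaling by $\lambda$ with $\lambda^2$ acting on $z_1$; one checks the rescaling must be trivial since $x_0,x_1$ are fixed. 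Since the roots of $f,g,h$ are distinct, a nontrivial element fixing a point of $C_1\times C_2$ would need $\langle(0,1)\rangle$ or $\langle(1,0)\rangle$ to meet $\langle(1,-1)\rangle$ or $\langle(1,1)\rangle$ nontrivially. Moreover the fixed point must lie over a common branch point, and no root is shared between the relevant branch loci: over the roots of $f$, for instance, $C_1$ has stabilizer $\langle(0,1)\rangle$ while $C_2$ has $\langle(1,1)\rangle$, and these meet trivially. So the action is free and $S$ is isogenous to a product of unmixed type. Regularity follows from Proposition \ref{prop: irregularity of P.Q.}, because $C_i/G\cong\mathbb{P}^1$.

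Second, identify the two $Q$-covers. The text above computes $C_1/H=\{y^p=fg\}$ and $C_2/H=\{z_1^p=fg\}$ in $\mathbb{P}(1,1,2)$, both mapping to $\mathbb{P}^1$ via $(x_0,x_1)$. It remains to match the residual $Q$-actions. The class of $(1,0)$ generates $Q$; it acts by $y\mapsto\xi y$ on $C_1/H$ and by $z_1\mapsto\xi z_1$ on $C_2/H$. Hence $y\mapsto z_1$ is a $Q$-equivariant isomorphism over $\mathbb{P}^1$, so $\eta_1\cong\eta_2$ as $Q$-covers.

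Third, the genus condition $g(C)\ge1$ for $C=\{y^p=fg\}$. This is the cyclic $p$-cover of $\mathbb{P}^1$ totally branched over the $2p$ roots of $fg$. Riemann–Hurwitz gives $2g-2=-2p+2p(p-1)$, so $g=(p-1)^2\ge4$. The claim then follows from Proposition \ref{prop:cone_isogneous_non_polyhedral_via_factorization}.

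The main obstacle is the careful stabilizer computation on the weighted curve $C_2$. There one must account for the weighted-projective rescaling when deciding which group elements fix a point, and confirm faithfulness for $p\ge3$; everything else is formal.
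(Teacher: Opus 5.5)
Your proposal is correct and follows the paper's proof: it shows freeness because the elements with fixed points on $C_1$ (those in $\langle(1,0)\rangle\cup\langle(0,1)\rangle$) meet those with fixed points on $C_2$ (those in $\langle(1,-1)\rangle\cup\langle(1,1)\rangle$) only in the identity, gets regularity from Proposition~\ref{prop: irregularity of P.Q.}, and concludes via Proposition~\ref{prop:cone_isogneous_non_polyhedral_via_factorization} with $H=\langle(1,-1)\rangle$ and $g(C_i/H)=(p-1)^2$; your explicit check that $(1,0)$ acts by $\xi$ on both $y$ and $z_1$ is a useful detail the paper leaves implicit. One harmless slip: over the roots of $f$ one has $y_0=0\neq y_1$, so the stabilizer on $C_1$ is $\langle(1,0)\rangle$, not $\langle(0,1)\rangle$ (and symmetrically for $g$), but only the union of the two subgroups enters the argument, so the conclusion is unaffected, and your extra sentence about common branch points is unnecessary.
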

\begin{proof}
It is easy to deduce from \eqref{eq:action of G 1}, as $x_0$ and $x_1$ do not vanish simultaneously on $C_1$, that the elements of $G$ fixing some point of $C_1$ are exactly those of the form $(a,0)$ or $(0,b)$. By \eqref{eq:action of G 2}, the only element of this form fixing a point of $C_2$ is $(0,0)$; thus the diagonal action of $G$ on $C_1 \times C_2$ is free and $S$ is isogenous to a product of unmixed type. Moreover,  $q(S)=0$ by Proposition \ref{prop: irregularity of P.Q.}.

So we have shown that the $G/H$-covers $\eta_i \colon C_i/H \rightarrow C_i/G$ are isomorphic.
The result now follows by Proposition \ref{prop:cone_isogneous_non_polyhedral_via_factorization}, since  $g(C_i/H)=(p-1)^2 \ge 4$. 
\end{proof}

\end{exampleWS}

\bigskip

\subsection{Product-quotient surfaces with geometric genus zero}
 In \cite{BaPi16}*{Definitions 2.1-2.3} the authors associate to every surface $X$ whose singular points are cyclic quotient singularities a rational number $\gamma(X)$ that only depends on the singularities of $X$. They prove the following, see \cite{BaPi16}*{Prop. 4.2},
 \begin{proposition}
 If $X$ is the quotient model of a product-quotient surface $S$, then
 $\gamma(X)$ is integral with $p_g(S)+\gamma(X)\ge0$ and
\begin{equation}
	h^{1,1}(S)-n(f_i)-2=2(p_g(S)+\gamma(X)).
\end{equation}
\end{proposition}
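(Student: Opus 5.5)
The plan is to compute $h^{1,1}(S)$ through $G$-invariant cohomology of $C_1\times C_2$ and compare the result with $p_g(S)$. Write $V_i:=H^0(C_i,\,K_{C_i})$, viewed as $G$-representations. The first step uses that the quotient model $X=(C_1\times C_2)/G$ has only cyclic quotient singularities. Hence $H^*(X,\mathbb{Q})=H^*(C_1\times C_2,\mathbb{Q})^G$ as pure Hodge structures. The minimal resolution $\lambda\colon S\to X$ replaces each singular point by a Hirzebruch--Jung string of smooth rational curves, which adds exactly $l(X)$ independent classes of type $(1,1)$ and nothing else. This gives
\begin{equation}
h^{1,1}(S)=h^{1,1}\big((C_1\times C_2)^G\big)+l(X).
\end{equation}
By Künneth, $H^{1,1}(C_1\times C_2)=H^{1,1}(C_1)\oplus H^{1,1}(C_2)\oplus (V_1\otimes \overline{V_2})\oplus(\overline{V_1}\otimes V_2)$. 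The first two summands are trivial $G$-modules and give the two fibre classes. The last two are complex conjugate, so
\begin{equation}
h^{1,1}(S)-l(X)-2=2\dim (V_1\otimes\overline{V_2})^G .
\end{equation}
Since $n(f_i)=l(X)$, the left-hand side is $h^{1,1}(S)-n(f_i)-2$.

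In the same way, $p_g(S)=p_g$ of any resolution of $X$, and quotient singularities are rational, so $p_g(S)=\dim(V_1\otimes V_2)^G$. It therefore suffices to show
\begin{equation}
\dim(V_1\otimes\overline{V_2})^G-\dim(V_1\otimes V_2)^G=\gamma(X).
\end{equation}
Once this holds, integrality of $\gamma(X)$ is automatic, since both terms are dimensions. Likewise $p_g(S)+\gamma(X)=\dim(V_1\otimes\overline{V_2})^G\ge 0$.

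For this identity I will use characters. Let $\chi_i$ denote the character of $V_i$. Then
\begin{equation}
\dim(V_1\otimes\overline{V_2})^G-\dim(V_1\otimes V_2)^G=\frac{1}{|G|}\sum_{g\in G}\chi_1(g)\big(\chi_2(g)-\overline{\chi_2(g)}\big).
\end{equation}
The term for $g=1$ vanishes, as does every term with $g$ acting freely on $C_2$, because then $\chi_2(g)$ is real. That $\chi_2(g)$ is real in the free case follows from the Lefschetz formula, which gives $\chi_2(g)+\overline{\chi_2(g)}=2-\#\operatorname{Fix}(g)$, together with the holomorphic Lefschetz (Eichler trace) formula. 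For the remaining terms I will express $\chi_1(g)$ and $\chi_2(g)$ through the Eichler trace formula, which writes them as sums over the fixed points of $g$ of $1/(1-\zeta)$, where $\zeta$ is the rotation angle. This turns the whole sum into a sum over points of $C_1\times C_2$ with nontrivial stabilizer, that is, over the singular points of $X$. Grouping by $G$-orbits should yield a local contribution from each singular point $x$ of type $\tfrac1n(1,q)$, namely a Dedekind-sum-type expression in $n$ and $q$.

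The main obstacle is the last step: checking that this Dedekind-sum contribution equals the local invariant entering the definition of $\gamma(X)$ in \cite{BaPi16}*{Definitions 2.1--2.3}. That invariant is expressed through $q$, its inverse $q'$ modulo $n$, and the continued fraction $[b_1,\dots,b_l]$ of $n/q$. I would first try to avoid the explicit reciprocity computation. Instead, I would compare two formulas for $e(S)$ and $K_S^2$: on one side, $e(S)=e(C_1)e(C_2)/|G|+\sum_x(\text{local correction})+l(X)$ and $K_S^2$ computed from $K_X$ and the discrepancies; on the other side, Noether's formula $12\chi(\mathcal O_S)=K_S^2+e(S)$ combined with the Hodge-theoretic expression of $e(S)$ in terms of $p_g$, $q$ and $h^{1,1}$ obtained above. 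The formulas of \cite{BaPi16} for $K_S^2$ and $e(S)$ in terms of the local invariants $B_x$ would then force the difference to be exactly $\gamma(X)$.
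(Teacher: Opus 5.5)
Your reduction is correct and matches the argument behind \cite{BaPi16}*{Prop.~4.2}, which the paper only quotes. The splitting $H^{1,1}(S)=H^{1,1}(C_1\times C_2)^G\oplus\bigoplus_j\mathbb{C}[E_j]$ and the two complex-conjugate K\"unneth summands give $h^{1,1}(S)-l(X)-2=2\dim(V_1\otimes\overline{V_2})^G$. Once the identity is known, integrality of $\gamma(X)$ and $p_g(S)+\gamma(X)\ge 0$ follow exactly as you say.

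What you leave unproved is the identity itself, and it needs no local computation, because $\gamma$ is tailored to Noether's formula. Take $x$ of type $\frac1n(1,q)$, with $n/q=[b_1,\dots,b_l]$ and $qq'\equiv 1\pmod n$. Then one has $e_x=l+1-\frac1n$, $k_x=-2+\frac{2+q+q'}{n}+\sum_i(b_i-2)$ and $B_x=2e_x+k_x$. The local invariant $\gamma_x=\frac16\bigl(\frac{q+q'}{n}+\sum_i(b_i-3)\bigr)$ is exactly $\frac16(B_x-3l_x)$. Now eliminate $(g_1-1)(g_2-1)/|G|$ from the relations $K_X^2=8(g_1-1)(g_2-1)/|G|$, $K_S^2=K_X^2-\sum_x k_x$, $e(S)=4(g_1-1)(g_2-1)/|G|+\sum_x e_x$ and $K_S^2+e(S)=12\chi(\mathcal{O}_S)$. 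This gives $e(S)=4\chi(\mathcal{O}_S)+\frac13\sum_x B_x$. Comparing with $e(S)=2-4q(S)+2p_g(S)+h^{1,1}(S)$, and noting that $q(S)$ cancels, yields
\[
h^{1,1}(S)-l(X)-2=2p_g(S)+\frac13\sum_x\bigl(B_x-3l_x\bigr)=2\bigl(p_g(S)+\gamma(X)\bigr).
\]
So your second route, once written out, is the whole proof.

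The Eichler-trace route is a genuinely different, local argument. It is in effect a $G$-signature computation, since your difference of dimensions equals $-\frac12\tau(X)$. It can be completed with the classical formula $12s(q,n)=\frac{q+q'}{n}+\sum_i(b_i-3)$ for Dedekind sums, which says $\gamma_x=2s(q,n)$. This gives a conceptual interpretation of $\gamma$, but it costs the reciprocity bookkeeping and is sign-sensitive.

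Your displayed identity already gets that sign wrong. The character of $V_1\otimes\overline{V_2}$ is $\chi_1\overline{\chi_2}$, so the difference of dimensions is $\frac1{|G|}\sum_g\chi_1(g)\bigl(\overline{\chi_2(g)}-\chi_2(g)\bigr)$. That is the negative of what you wrote, so your expression actually equals $-\gamma(X)$.
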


Thus, in this case Theorem \ref{thm: main} has the following reformulation.
\begin{corollary}
Let $S$ be a product-quotient surface with quotient model $X$.
If  $\overline{\operatorname{Eff}}(S)$ is polyhedral, then  
\begin{equation} 
h^{1,1}(S)-\rho(S) = 2(p_g(S)+\gamma(X)).
\end{equation}
\end{corollary}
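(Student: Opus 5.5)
The corollary follows by combining Theorem \ref{thm: criterium} with the proposition of \cite{BaPi16} just quoted; no new geometry is needed. Since $S$ is a product-quotient surface, it carries the fibration $f_1 \colon S \to C_1/G$. This is a fibration of a smooth projective surface onto a smooth curve, so if $\overline{\operatorname{Eff}}(S)$ is polyhedral, Theorem \ref{thm: criterium} gives
\begin{equation}
\rho(S) = n(f_1)+2 .
\end{equation}

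Next, the cited proposition states that for the quotient model $X$ of $S$,
\begin{equation}
h^{1,1}(S)-n(f_1)-2 = 2\bigl(p_g(S)+\gamma(X)\bigr).
\end{equation}
Substituting $n(f_1)+2=\rho(S)$ into the left-hand side gives $h^{1,1}(S)-\rho(S)=2(p_g(S)+\gamma(X))$, which is the claim. The choice of fibration does not matter: we recorded that $n(f_1)=n(f_2)=l(X)$, so $f_2$ would serve equally well.

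There is essentially no obstacle. The only point to check is that Theorem \ref{thm: criterium} needs nothing beyond $S$ being smooth projective and $f_1$ having connected fibres. Smoothness holds because $S$ is by definition a minimal resolution. Connectedness holds because $f_1$ is induced by the projection $C_1\times C_2\to C_1$, whose fibres are irreducible, and passing to the $G$-quotient and then to the resolution preserves connectedness by Zariski's main theorem. No regularity assumption is required.
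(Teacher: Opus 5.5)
Your proposal is correct and is exactly the paper's argument: the paper obtains the corollary by substituting $\rho(S)=n(f_i)+2$ from Theorem \ref{thm: criterium} into the identity $h^{1,1}(S)-n(f_i)-2=2(p_g(S)+\gamma(X))$ of \cite{BaPi16}. Your extra verification that $f_1$ is a genuine fibration of a smooth projective surface (connected fibres via the quotient and Zariski's main theorem) is sound; the paper leaves it implicit.
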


If moreover $p_g(S)=0$, by the Lefschetz $(1,\, 1)$-theorem we have $\rho(S)=h^{1, 1}(S)$. Thus by Corollary \ref{cor:main_corollary} we obtain

\begin{theorem}\label{thm: not_MOri_Dream_PQ}
Let $S$ be a product-quotient surface with $p_g(S)=0$.  If $\gamma(X)>0$ then   $\overline{\operatorname{Eff}}(S)$ is non-polyhedral, hence $S$ is not a Mori dream space.
\end{theorem}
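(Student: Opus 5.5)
The plan is to combine the Bauer–Pignatelli identity with the Lefschetz $(1,1)$-theorem and then apply Corollary \ref{cor:crit} to one of the two isotrivial fibrations $f_1 \colon S \to C_1/G$ or $f_2 \colon S \to C_2/G$.

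First, since $p_g(S)=0$ we have $h^{2,0}(S)=0$. The Lefschetz $(1,1)$-theorem then gives $H^{1,1}(S)\cap H^2(S,\mathbb{Z}) = H^2(S,\mathbb{Z})$ modulo torsion, so $N^1(S)\otimes\mathbb{C}\cong H^2(S,\mathbb{C})$ and $\rho(S)=b_2(S)=h^{1,1}(S)$. One small point to record: numerical equivalence kills only torsion, so $\rho$ computed in $N^1(S)$ is indeed $b_2(S)$.

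Next, apply the proposition from \cite{BaPi16}*{Prop. 4.2} recalled just above the statement. With $p_g(S)=0$ it reads
\begin{equation}
\rho(S)-n(f_1)-2 = h^{1,1}(S)-n(f_1)-2 = 2\gamma(X).
\end{equation}
Since $\gamma(X)>0$ by hypothesis, this gives $\rho(S)>n(f_1)+2$.

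Corollary \ref{cor:crit}, applied to $f_1$, then shows that $\overline{\operatorname{Eff}}(S)$ is non-polyhedral. By Theorem \ref{thm:Mori_dream_surface}, or simply because Mori dream spaces have polyhedral effective cone, $S$ is not a Mori dream space. If one prefers not to use Theorem \ref{thm:Mori_dream_surface} when $q(S)>0$, note that Definition \ref{def:Mori_dream_space} already requires $q=0$, so that case is immediate.

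No step here is a real obstacle, since everything is assumed from earlier results. The only points needing care are the identification $n(f_i)=l(X)$ used inside the cited identity, which the paper records earlier, and the equality $\rho=h^{1,1}$ for $p_g=0$, which is standard.
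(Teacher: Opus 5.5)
Your proposal is correct and is the paper's argument: with $p_g(S)=0$ the Lefschetz $(1,1)$-theorem gives $\rho(S)=h^{1,1}(S)$, the Bauer--Pignatelli identity then gives $\rho(S)-n(f_i)-2=2\gamma(X)>0$, and Corollary \ref{cor:crit} concludes. Your remark that the case $q(S)>0$ is already excluded by Definition \ref{def:Mori_dream_space} is a small precision that the paper leaves implicit.
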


In \cite{BaPi16}*{Tables 2–5} one can find more than 100 families of surfaces with $p_g=q=0$ and $\gamma>0$. By Theorem \ref{thm: not_MOri_Dream_PQ}, these surfaces are not Mori dream spaces. Moreover, none of them is of general type \cite{BaPi16}*{Subsection 7.3}. 

To the best of our knowledge, only four examples of product-quotient surfaces of general type with $p_g=q=0$ and $\gamma >0$ are known to exist;  they are described  in Table \ref{table: ex_pg=q=0_genb_type_not_min}.  By Theorem \ref{thm: not_MOri_Dream_PQ}, they are not Mori dream spaces.

Note that these examples are necessarily non-minimal (Remark  \ref{rmk:non_minimal_gamma_positive}). It is an interesting open problem to determine whether their minimal models  are Mori dream spaces.

 The last two examples, namely those with $\gamma=2$, contain precisely  two $(-1)$-curves $E_1$, $E_2$, such that $E_i$ is contracted by $f_i \colon S \to C_i/G$. Contracting either $E_1$ or $E_2$ we get a surface $S'$ with $p_g(S')=q(S')=0$, $K_{S'}^2=0$ which still satisfies the assumption of Corollary \ref{cor:crit}.\footnote{Given a fibration $f \colon S \lr C$ of a surface to a curve, and a $(-1)$-curve $E \subset S$ contracted by $f$, then $f=f' \circ c$ where $c \colon S \lr S'$ is the contraction of $E$ and $f'$ is a fibration on $S'$. 
As $\rho(S)=\rho(S')+1$ and $n(f)=n(f')+1$ we see that $\rho(S) > n(f)+2$ if and only if $\rho(S') > n(f')+2$.} So $S'$ is not a Mori dream space, either.

\begin{small}
	\begin{table*}[h!]
	
	\centering
	
	\renewcommand{\arraystretch}{1.2}
	\begin{tabular}{c | c| c| c|c| c|  c|c }
		$\gamma $ & $K_S^2$ & $K_{\widetilde{S}}^2$ & $G$ & Sing($X$) &	 $t_1$ & $t_2$ &  Reference\\
		\hline
	1& 	$ 1 $ & $3$& $\PSL(2,7)$   &$1/7,\, 2/7,\, 4/7$ & $ 3,\, 3,\, 7 $ & $ 2,\, 4,\, 7 $ & \cite{BaPi12}*{Sec. 5} \\
1& 	$ 0 $ & $1$& $\GL(2,4)$  & $2 \times 1/6,\, 2 \times 1/2,\, 2/3$ & $ 4,\, 4,\, 6 $ & $ 2,\, 4,\, 6$ &\cite{BaPi16}*{Sec. 7.1}.\\
2& 	$ -1 $ & $1$& $\mathbb Z_5 \times \mathbb Z_5$ & $5 \times 1/5$ & $ 5,\, 5,\, 5 $ & $ 5,\, 5,\, 5 $ & \cite{BaPi16}*{Sec. 7.2}.\\
2& 	$ -1 $ & $1$& $\mathbb Z_5 \times \mathbb Z_5$ & $5 \times 1/5$ & $ 5,\, 5,\, 5 $ & $ 5,\, 5,\, 5 $ & \cite{Fede25}*{Thm. 5.5} \\

	\end{tabular}
	\bigskip	
	
	\caption{Known examples of product-quotient surfaces of general type with $p_g=q=0$ and $\gamma >0$. Here $\widetilde{S}$ is the minimal model of $S$, whereas $t_i$ denotes the signatures of the cover $f_i \colon C_i \to C_i/G \simeq \mathbb{P}^1$.} 
	\label{table: ex_pg=q=0_genb_type_not_min}
\end{table*}
\end{small}


\begin{remark} \label{rmk:non_minimal_gamma_positive}
A product-quotient surface $S$ of general type with $p_g(S)=q(S)=0$ is minimal if and only if $\gamma(X)=0$, see \cite{BaPi16}*{Theorem 6.2}. These surfaces are listed in \cite{BaPi12}*{Tables 1 and 2}, and it is an interesting problem to classify those which are Mori dream spaces.
\end{remark}

\section{The Craighero-Gattazzo surface}

The Craighero-Gattazzo surface $V$ is a particular numerical Godeaux surface, namely, a minimal surface of general type with 
\begin{equation}
p_g(V)=q(V)=0, \quad K^2_V=1.
\end{equation}
It was first constructed by Craighero and Gattazzo \cite{CG}. Later, it was studied by Dolgachev and Werner \cite{DW} who proved, among other things, the vanishing of $H_1(V,\,{\mathbb Z})$ and claimed that $V$ is simply connected. The proof of this last result contained a gap \cite{DWerr}, and the fact that $V$ is simply connected has been proved only subsequently by Rana, Tevelev and Urz\'ua \cite{RTU}, using different methods.

The aim of this section is to show that the Craighero-Gattazzo surface  is not a Mori dream space by applying Corollary \ref{cor:main_corollary} to a special genus $2$ pencil on $V$, studied in \cite{DW}. We will use the notation of that paper throughout.

The Craighero-Gattazzo surface $V$ is defined as the minimal resolution of singularities of a quintic surface $S \subset \mathbb{P}^3$, having four elliptic singularities, see  \cite{DW}*{Section 2}. The quintic $S$ is invariant with respect to the order $4$ automorphism of $\mathbb{P}^3$
\begin{equation} \label{eq:sigma}
\sigma \colon [x:y:z:t] \mapsto [t:x:y:z]
\end{equation}
which has the four fixed points
\begin{equation}
\begin{split}
P_0& =[1:1:1:1], \quad P_1=[1:i:-1:-i], \\ 
P_2&=[1:-i:-1:i],\quad Q_0=[1:-1:1:-1]. 
\end{split}
\end{equation}
The set of fixed points of
\begin{equation} \label{eq:sigma^2}
\sigma^2 \colon [x:y:z:t] \mapsto [z:t:x:y]
\end{equation} 
is equal to the union of the two lines
\begin{equation}
\begin{split}
r & = \{x+z=y+t=0 \} = \langle P_1, \, P_2 \rangle \\
r' & = \{x-z=y-t=0 \} = \langle P_0, \, Q_0 \rangle,
\end{split}
\end{equation}
and the elliptic singularities of $S$ are at the four reference points 
\begin{equation}
a_1=[1:0:0:0], \quad a_2=[0:1:0:0], \quad a_3=[0:0:1:0], \quad a_4=[0:0:0:1].
\end{equation}
Let us now consider the minimal resolution of singularities
\begin{equation}
\pi \colon V \to S
\end{equation}
and let $E_i := \pi^{-1}(a_i)$. Then the $E_i$ are four elliptic curves such that
\begin{equation}
E_i^2=-1, \quad K_V E_i=1
\end{equation}
and, by adjunction,
\begin{equation} \label{eq:canonical_class_CG}
K_V = \pi^*H - E_1-E_2-E_3-E_4,
\end{equation}
where $H$ is a hyperplane section of the quintic surface $S$.

The reader can find in \cite{DW}*{Section 2} a simple proof that $V$ is of general type with $K_V^2=1$, $p_g(V)=q(V)=0$. Moreover, we will later need that $V$ does not contain any $(-2)-$curve, \cite{DW}*{Theorem 6.2}: in other words, $K_V$ is ample.

The automorphisms $\sigma$ and $\sigma^2$ on $S$ both lift to automorphisms on $V$, which we denote by the same symbols. The automorphism $\sigma \colon V \to V$ cyclically permutes the exceptional elliptic curves $E_1$, $E_2$, $E_3$, $E_4$, hence
\begin{equation} \label{eq:action_of_sigma2_sulle-Ei}
\sigma^2(E_1)=E_3, \quad \sigma^2(E_2)=E_4.
\end{equation}

The line $r$ is contained in the quintic $S$, and we call 
\begin{equation}
R = \pi^{-1}(r)
\end{equation}
the inverse image of $r$ in $V$. This is a smooth rational curve, fixed by the involution $\sigma^2 \colon V \to V$ and such that
\begin{equation}
K_V R = 1, \quad R^2=-3.
\end{equation}
The isolated fixed points of $\sigma^2$ on $V$ form a set  $\Sigma=\{Q_0, \, Q_1, \, Q_2, \, Q_3, \, Q_4\}$, whose five elements are the preimages of the five points where $r'$ intersects $S$. 

Now we consider the complete linear system
\begin{equation}
|3K_V-R|
\end{equation}
on $V$. It is a pencil without fixed part \cite{DW}*{Proposition 3.2}. Since $F^2=(3K_V-R)^2=0$, such pencil has no base points at all, so it defines a morphism
\begin{equation}
f \colon V \to \mathbb{P}^1.
\end{equation}
By the adjunction formula, the general fibre of $f$ is a smooth curve of genus $2$, since $(3K_V-R)K_V=2$. We are interested in the reducible fibres of $f$. As $K_V$ is ample, a fibre $F$ of $f \colon V \to \mathbb{P}^1$ has at most $K_VF=2$ irreducible components. In particular, if $F$ is reducible, we have $F=A+A'$, with $K_VA=K_VA'=1$. 
Moreover, since $(A+A')A=(A+A')A'=0$, and $0 \le p_a(A) = \frac{1}{2}{(3-AA')}$, we get only two numerical possibilities:
\begin{itemize}
\item[$(a)$] $AA'= -A^2=-(A')^2=1;$
\item[$(b)$] $AA'= -A^2=-(A')^2=3.$
\end{itemize}

\begin{lemma} \label{lem:no_other_redicible_fibres}
Case $(b)$ does not occur.
\end{lemma}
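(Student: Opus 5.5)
The plan is to derive a contradiction from the numerics of case $(b)$ combined with the geometry of the quintic model $S$. Use the involution $\sigma^2$ only if the first step leaves a case open.

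\emph{Step 1: the components are smooth rational curves.} In case $(b)$ we have $A^2=(A')^2=-3$ and $K_VA=K_VA'=1$. Adjunction gives $p_a(A)=p_a(A')=0$, so $A$ and $A'$ are smooth rational curves. They have the same numerical invariants as $R$.

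\emph{Step 2: push the components down to $S$.} Since $\pi^*H=K_V+\sum E_i$ by \eqref{eq:canonical_class_CG}, each component $C\in\{A,A'\}$ maps to a rational curve $\pi(C)\subset S$ of degree $1+\sum_i CE_i$. The line $r$ does not pass through any of the points $a_i$, so $RE_i=0$. Hence
\[
FE_i=(3K_V-R)E_i=3 \quad\text{for every } i, \qquad \textstyle\sum_i (A+A')E_i=12 .
\]
The two fibre components therefore map to rational curves of degrees $d,d'$ with $d+d'=14$, and each passes through the elliptic singularities with prescribed multiplicities.

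\emph{Step 3: constrain with $R$ and the symmetry.} From $RF=R(3K_V-R)=3+3=6$ we get $RA+RA'=6$. By Hodge index on $\operatorname{span}(K_V,F,A)$, or on the $3\times 3$ Gram matrix of $K_V,R,A$ (which must have signature $(1,2)$ or be degenerate), we obtain inequalities on $RA$. The pencil $|3K_V-R|$ is $\sigma^2$-invariant, since $\sigma^2$ fixes $K_V$ and $R$, and $\sigma^2$ exchanges $E_1\leftrightarrow E_3$ and $E_2\leftrightarrow E_4$. So $\sigma^2$ either preserves a type-$(b)$ fibre or swaps it with another one. In both situations we get strong restrictions on the distribution of the numbers $AE_i$, on the components fixed or swapped by $\sigma^2$, and on how $A$ meets $R$, which is pointwise fixed, and the isolated fixed points $Q_j$.

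\emph{Step 4: Euler characteristic count.} By Zeuthen–Segre, $e(V)=11$ and a genus-$2$ fibre has $e=-2$, so $\sum_t \mu(F_t)=11-2\cdot(-2)=15$, where $\mu(F_t):=e(F_t)+2$. A type-$(b)$ fibre is a union of two $\mathbb{P}^1$'s meeting in at most three points, so it contributes at least $3$. Together with the contributions forced by $\sigma^2$ (the fixed fibres and those through $\Sigma$), this count should leave no room for type-$(b)$ fibres, possibly after combining it with the degree constraints of Step 2.

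\emph{Main obstacle.} I expect Steps 3–4 not to close the argument purely numerically. In that case I would fall back on the explicit equations from \cite{DW}: write the pencil $|3K_V-R|$ as cubics through $r$ with prescribed behaviour at the $a_i$, and check directly that no member splits into two rational curves meeting in three points. Concretely, the images in $S$ would have to be rational curves of total degree $14$ with the forced tangencies at the $a_i$, and I would show that the quintic contains no such pair. This explicit verification is the step I regard as hardest.
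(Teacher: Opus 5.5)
Your proposal does not yet prove the lemma, and the gap is where you expect it. The numerical constraints in Steps 1--4 are all compatible with case $(b)$. Zeuthen--Segre gives $\sum_t \mu(F_t)=15$. The five type-$(a)$ fibres through $\Sigma$ account for at least $5$, and a type-$(b)$ fibre needs only $\mu\ge 3$, so up to three such fibres survive the count. The degree count $d+d'=14$ and the Hodge index inequality exclude nothing either: with $RA=RA'=3$ the Gram matrix of $K_V,R,A$ has determinant $12>0$, which is consistent with signature $(1,2)$. The fallback, an explicit search on the quintic, is not carried out, so no contradiction is reached.

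The missing idea is cohomological. Since $\operatorname{Fix}(\sigma^2)=R\sqcup\Sigma$ and $H^1(V,\mathbb{Q})=H^3(V,\mathbb{Q})=0$, the topological Lefschetz fixed point formula gives $\operatorname{trace}((\sigma^2)^*|H^2(V,\mathbb{Q}))=7-2=5$. As $b_2(V)=9$, the anti-invariant part $H^2(V,\mathbb{Q})^-$ is $2$-dimensional, hence spanned by $E_1-E_3$ and $E_2-E_4$.

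Next, every fibre is $\sigma^2$-invariant. You need not hedge here: $R$ is pointwise fixed and dominates $\mathbb{P}^1$, so $\sigma^2$ acts trivially on the base. Moreover, $A$ and $A'$ cannot be individually invariant. The paper gets this from Lemma 3.4 of Dolgachev--Werner (odd self-intersection), after checking that the fibre avoids $\Sigma$. Your own data also suffice. If $\sigma^2(A)=A$, then $\sigma^2|_A$ is a nontrivial involution of $A\cong\mathbb{P}^1$, since $A\not\subset\operatorname{Fix}(\sigma^2)$. It therefore has exactly two fixed points, with differential $-1$. Every point of $A\cap R$ is such a fixed point, and $A$ meets $R$ transversally there, so $AR\le 2$. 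Likewise $A'R\le 2$, contradicting $AR+A'R=RF=6$.

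Hence $A'=\sigma^2(A)$, and
\[
A-A'=\lambda(E_1-E_3)+\mu(E_2-E_4),
\]
with $\lambda=(A-A')E_3$ and $\mu=(A-A')E_4$ integers. Then
\[
-12=A^2-2AA'+(A')^2=(A-A')^2=-2(\lambda^2+\mu^2)
\]
forces $\lambda^2+\mu^2=6$, which has no integral solution. This computation of the anti-invariant cohomology, followed by the integrality argument, is what your plan lacks. Without it or some equally decisive input, the numerics you list do not exclude $(b)$.
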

\begin{proof}
We have seen that the fixed locus of the involution $\sigma^2 \colon V \to V$ consists of the disjoint union of the smooth rational curve $R$ and the five isolated points in $\Sigma$. Then, using the topological fixed point formula \cite{KwasikSchultz}*{Theorem 1} and recalling that $q(V)=0$ implies $H^1(V, \, \mathbb{Q})=H^3(V, \, \mathbb{Q})=0$, we get 
\begin{equation}
\begin{split}
2+5= \chi_{\operatorname{top}}(\operatorname{Fix}(\sigma^2)) & =\sum (-1)^q \operatorname{trace} \; ((\sigma^{2})^{*} \;| \; H^q(V, \, {\mathbb Q})) \\
& =2+\operatorname{trace} \; ((\sigma^{2})^{*} \;| \; H^2(V, \, {\mathbb Q})),
\end{split}
\end{equation}
that is, $\operatorname{trace} \; ((\sigma^{2})^{*} \;| \; H^2(V, \, {\mathbb Q})) = 5$. 

On the other hand, since $p_g(V)=q(V)=0$ and $\chi_{\operatorname{top}}(V)=11$, we infer $b_2(V)=9$. 

Thus, the invariant and anti-invariant subspaces $H^2(V, \, {\mathbb Q})^{+}$ and $H^2(V, \, {\mathbb Q})^{-}$ for the involution $(\sigma^2)^* \colon H^2(V, \, {\mathbb Q}) \to H^2(V, \, {\mathbb Q})$ have respective dimensions
\begin{equation}
\dim H^2(V, \, {\mathbb Q})^{+}=7, \quad \dim H^2(V, \, {\mathbb Q})^{-}=2.
\end{equation}
By \eqref{eq:action_of_sigma2_sulle-Ei}, a basis for $H^2(V, \, {\mathbb Q})^{-}$ consists of
\begin{equation}
\{E_1-E_3, \; E_2 - E_4\}.
\end{equation}
Assume now by contradiction that there exists a reducible fibre $A+A'$ as in $(b)$. Then
\begin{itemize}
\item $A+A'$ does not contain any of the five isolated fixed points of $\sigma^2$, as they lie on fibres of type (a) as shown in \cite{DW}*{Lemma 3.5, Corollary 3.6}.\footnote{As $K_V$ is ample, the cases (ii) and (iii) in \cite{DW}*{Lemma 3.5} do not occur; cf. \cite{DW}*{p. 751 and Remark 3.9}.} 
\item $A^2= (A')^2=-3$ is odd, hence $A$ and $A'$ are not $\sigma^2$-invariant \cite{DW}*{Lemma 3.4};
\item $A+A'$ is $\sigma^2$-invariant, as all members of $|3K_V - R|$ are so \cite{DW}*{p. 742}. 
\end{itemize}
Thus $A'=\sigma^2(A)$, that is $A-A' \in H^2(V,\, {\mathbb Q})^-$. Therefore there exist $\lambda, \, \mu \in \mathbb{Q}$ such that, in $H^2(V, \, \mathbb{Q})$:
\begin{equation}
A-A' = \lambda(E_1-E_3)+\mu(E_2-E_4).
\end{equation}
Since $\lambda=(A-A')E_3$ and $\mu=(A-A')E_4$, both $\lambda$ and $\mu$ are integers. Then 
\begin{equation}
2(\lambda^2 + \mu^2) = -(A-A')^2 = -(A^2 -2 AA'+(A')^2) = 12
\end{equation}
provides the desired contradiction, because the equation $\lambda^2 + \mu^2=6$ has no integral solutions.
\end{proof}

So, all reducible fibres of $f$ are of type $(a)$.  We are now in a position to prove the main result of this section.
\begin{theorem} \label{thm: Craighero-Gattazzo_non_Mds}
The Craighero-Gattazzo surface $V$ is  not a Mori dream space, as its pseudo-effective cone is non-polyhedral.

\end{theorem}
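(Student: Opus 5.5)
The plan is to apply Corollary \ref{cor:crit} to the genus $2$ fibration $f \colon V \to \mathbb{P}^1$ defined by $|3K_V-R|$. Since $p_g(V)=q(V)=0$, the Lefschetz $(1,1)$-theorem gives $\rho(V)=b_2(V)=9$, as computed in the proof of Lemma \ref{lem:no_other_redicible_fibres}. Every reducible fibre has exactly two components, so $n(f)$ equals the number of reducible fibres. It therefore suffices to show that $f$ has at most $6$ reducible fibres. I expect to obtain a much stronger bound, namely at most $2$, by reusing the lattice argument of Lemma \ref{lem:no_other_redicible_fibres}.

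\textbf{Step 1: each reducible fibre is swapped by $\sigma^2$.} By Lemma \ref{lem:no_other_redicible_fibres}, every reducible fibre is $F=A+A'$ with $A^2=(A')^2=-1$ and $AA'=1$. Since $A^2$ is odd, \cite{DW}*{Lemma 3.4} shows that $A$ is not $\sigma^2$-invariant. The fibre $A+A'$ is $\sigma^2$-invariant, because all members of $|3K_V-R|$ are. Hence $A'=\sigma^2(A)$, and so $A-A'\in H^2(V,\mathbb{Q})^-=\operatorname{span}(E_1-E_3,\,E_2-E_4)$.

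\textbf{Step 2: the difference class is one of four classes.} Write $A-A'=\lambda(E_1-E_3)+\mu(E_2-E_4)$. As in the lemma, intersecting with $E_3$ and $E_4$ shows that $\lambda,\mu$ are integers. Since $E_i^2=-1$ and the $E_i$ are disjoint,
\begin{equation*}
2(\lambda^2+\mu^2)=-(A-A')^2=-(A^2-2AA'+(A')^2)=4 .
\end{equation*}
Thus $\lambda,\mu\in\{\pm1\}$, and $A-A'$ is one of the four classes $\pm(E_1-E_3)\pm(E_2-E_4)$.

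\textbf{Step 3: distinct fibres give distinct classes.} Let $A_1+A_1'$ and $A_2+A_2'$ be two distinct reducible fibres, and suppose $A_1-A_1'=\pm(A_2-A_2')$. After swapping the names $A_2$ and $A_2'$ if necessary, we may assume $A_1-A_1'=A_2-A_2'$. Both fibres have class $e$, so adding $A_1+A_1'=A_2+A_2'$ gives $A_1\equiv A_2$. Then $A_1A_2=A_1^2=-1<0$. This is impossible, because $A_1$ and $A_2$ are distinct irreducible curves: they lie in different fibres and are in fact disjoint. Hence the map sending a reducible fibre to the pair $\{\pm(A-A')\}$ is injective into a set with $4/2=2$ elements.

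\textbf{Conclusion.} We get $n(f)\le 2$, so $\rho(V)=9>4\ge n(f)+2$, and Corollary \ref{cor:crit} shows that $\psEff(V)$ is non-polyhedral; in particular $V$ is not a Mori dream space. The only delicate point is Step 1, which imports from \cite{DW} the parity criterion for $\sigma^2$-invariance and the invariance of every member of the pencil. This is exactly the input already used in Lemma \ref{lem:no_other_redicible_fibres}, so I expect no new obstacle there. A cruder count via the Zeuthen–Segre formula only yields $n(f)\le 15$, which is not enough; the involution argument is what makes the bound work.
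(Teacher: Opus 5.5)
\textbf{Gap: Step 1 fails for the fibres through $\Sigma$.}

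In the proof of Lemma \ref{lem:no_other_redicible_fibres}, the parity criterion of \cite{DW}*{Lemma 3.4} is used only after checking that the fibre $A+A'$ contains none of the isolated fixed points in $\Sigma$. For type $(a)$ fibres this check fails: by \cite{DW}*{Lemma 3.5, Corollary 3.6}, the five fibres through the points of $\Sigma$ are of type $(a)$.

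On each of these fibres $\sigma^2$ preserves both components. Suppose $\sigma^2$ exchanged $A$ and $A'$. Then a point $Q\in\Sigma$ on $A+A'$ would have to lie in $A\cap A'$. There both curves are smooth with distinct tangent lines, because $AA'=1$. But at an isolated fixed point of an involution $d\sigma^2_Q=-\mathrm{id}$, which fixes every tangent line and so cannot send $T_QA$ to $T_QA'$.

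Hence for these fibres $A-A'$ lies in $H^2(V,\mathbb{Q})^+$, and your Steps 2 and 3 do not apply to them. Your conclusion $n(f)\le 2$ is in fact false: the paper shows that $f$ has exactly five reducible fibres, so $n(f)=5$.

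\textbf{What survives, and what is missing.}

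Your argument still bounds the reducible fibres disjoint from $\Sigma$. If the parity criterion applies there, as in the lemma, Steps 2 and 3 show there are at most $2$ such fibres. Adding the at most $5$ fibres through points of $\Sigma$ gives only $n(f)\le 7$, that is $n(f)+2\le 9=\rho(V)$. This is not the strict inequality that Corollary \ref{cor:crit} requires.

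The missing ingredient is a global bound on the number of reducible fibres. The paper takes it from the theory of genus $2$ fibrations \cite{CP}*{Theorem 4.13, Remark 4.14}. A type $(a)$ fibre $A+A'$ with $AA'=1$ is not $2$-connected, and the number of non-$2$-connected fibres is at most $K_V^2-2\chi(\mathcal{O}_V)+6=5$. Hence $n(f)\le 5$, so $\rho(V)=9>7\ge n(f)+2$, and Corollary \ref{cor:crit} applies. Neither your lattice argument nor the Zeuthen--Segre count reaches this bound on its own.
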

\begin{proof}
All reducible fibres of $f$ are of type $(a)$, hence they are not $2$-connected.

By the theory of genus $2$ fibrations \cite{CP}*{Theorem 4.13, Remark 4.14}, the number of fibres of $f$ that are not $2$-connected is at most $K_V^2-2\chi({\mathcal O}_V)+6=5$. However \cite{DW}*{Lemma 3.5, Corollary 3.6} shows that the five fibres containing a point of $\Sigma$ are of type (a): then $f$ has  exactly five reducible fibres and $n(f)=5$.

 On the other hand, since $p_g(V)=q(V)=0$, we get 
\begin{equation}
\rho(V)=h^{1,\,1}(V)=b_2(V)=9,
\end{equation}
so the result follows from Corollary \ref{cor:main_corollary}.
\end{proof}

\section*{Acknowledgements}
All authors were partially supported by INdAM-GNSAGA. The first author held a research grant from INdAM, Istituto Nazionale di Alta Matematica. The second author thanks C. Fontanari for first pointing out the paper \cite{KL19}  to him and for some interesting discussions on the problems raised there. We thank A. Rabindranath for useful discussions. We are also indebted to W. Sawin and J. Starr, who answered our questions on MathOverflow \cite{MO2026}.

\addtocontents{toc}{\protect\setcounter{tocdepth}{1}}



\end{document}